\documentclass{article}
\usepackage{graphicx} % Required for inserting images
\usepackage{lipsum}
\usepackage{lscape}
\usepackage{amsthm}
\usepackage{MnSymbol}
\newtheorem{theorem}{Theorem}[section]
\newtheorem{corollary}[theorem]{Corollary}
\usepackage{pst-all}
\usepackage{pstricks}    %for embedding pspicture.
\usepackage{graphicx} 
\usepackage{xfp}
\usepackage[utf8]{inputenc}
\usepackage{tikz}
\usepackage{rotating}
\usetikzlibrary{shapes.geometric}
\usepackage{amsmath}
\usepackage{pifont}
\usepackage{amsfonts}
\usepackage{ragged2e}
\usepackage{calculator}
 \usepackage{calculus}

\newtheorem{example}[theorem]{Example}
\newtheorem{prop}[theorem]{Proposition}
\theoremstyle{definition}
\theoremstyle{remark}
\newtheorem{remark}[theorem]{Remark}
\newtheorem{lemma}[theorem]{Lemma}

\newtheorem{definition}[theorem]{Definition}
\usepackage{tkz-euclide}
\usepackage{pgfplots}

\usepgflibrary{shapes.geometric} % LATEX and plain TEX and pure pgf
\usepgflibrary[shapes.geometric] % ConTEXt and pure pgf
\usetikzlibrary{shapes.geometric} % LATEX and plain TEX when using Tik Z
\usetikzlibrary[shapes.geometric] % ConTEXt when using Tik Z
\usetikzlibrary{shapes,decorations} %do not use snakes, use decorations instead
\usepackage{graphicx} % Required for inserting images
\renewcommand{\abstractname}{Abstract}
\NewDocumentCommand{\showcalculation}{o m}{$
  \IfValueTF{#1}
    {#1}{#2} = \fpeval{#2}
  $}

\renewenvironment{abstract}
 {\par\noindent\textbf{\abstractname.}\ \ignorespaces}
 {\par\medskip}
\begin{document}
\title{Heronian friezes and Plücker relations}
\author{Anja \v{S}neperger}
\date{}
\newcommand{\Addresses}{{% additional braces for segregating \footnotesize
  \bigskip
  \footnotesize
   \textsc{School of Mathematics \\ University of Leeds \\ Leeds, LS2 9JT \\ United Kingdom\\}\par\nopagebreak \noindent
  \textit{E-mail address}: \texttt{mmasn@leeds.ac.uk}}}
  \maketitle
\begin{abstract} \noindent
In this article, we use Plücker relations in the Grassmannian $Gr(3,n)$ to give relations that hold amongst some of the entries of the Heronian frieze of order $n$. Furthermore, we make a connection between certain subfriezes of a Heronian frieze  and Plücker friezes $P(3,n)$, and then show that some  determinants of the matrices whose elements lie in those subfriezes are vanishing.
\end{abstract}
\tableofcontents
\section*{Introduction} Frieze patterns were introduced by Coxeter \cite{key7}, and classified by Conway and Coxeter \cite{key8} in terms of triangulations of polygons. In 2019, Fomin and Setiabrata introduced \textit{Heronian friezes} \cite{fs}. As well as the theory of cluster algebras of type $A$, their main motivation was the fact that measurements of a triangulation of a plane quadrilateral satisfy some equations from classical geometry. One of these is  Heron's formula for the area of a triangle, which gives friezes their name. Amongst many other interesting results, they prove that a general Heronian frieze possesses glide symmetry and establish a version of the Laurent phenomenon for these friezes.  \\
In this paper, we consider Heronian friezes arising from a general polygon in the plane (as defined in \cite{fs}), without a restriction on it being cyclic (as opposed to \cite{key2}, where this condition was assumed). We introduce the notion of a \textit{Heronian minor}, which stands for a minor of a certain matrix, of a form corresponding to the form of the entries of the frieze that represent areas of triangles (Definition \ref{her_minor}). We also introduce the notion of a \textit{Heronian minor relation}, which represents a Plücker relation for the Grassmannian $Gr(3,n)$ containing only Heronian minors (Definition \ref{her_mnr_rln}).  As in \cite{key2}, we give some relations between the entries of the frieze corresponding to areas of triangles, but here we use Plücker relations in the Grassmannian $Gr(3,n)$ \cite{key5}. In Theorem \ref{main}, we give an equivalent condition for a Plücker relation in $Gr(3,n)$ to be a Heronian minor relation. By making a connection to the Plücker friezes for $k=3$ \cite{key6}, in Theorem \ref{bitna} we show the vanishing of certain determinants of matrices whose elements lie in the subfrieze of entries corresponding to triangle areas. The paper is organised as follows.\\ In Section 1, we give main definitions and results concerning Heronian friezes, as stated in \cite{fs}, as well as recall some remarks from \cite{key2}. In Sections 2 and 3, we give an overview of the main definitions and theorems regarding exterior algebra and the Grassmannian $Gr(k,n)$, as stated in \cite{key5}. In Section 4, we introduce the notions of \textit{Heronian minor} and  \textit{Heronian minor relation}. Then we state and prove Theorem \ref{main}, as well as give an example (Example \ref{primjer}) where we list all the Heronian minor relations of the polygonal  Heronian frieze corresponding to a $6$-gon. Following on that, we recall some results on Plücker friezes (as stated in \cite{key6}), and then use them to establish and prove Theorem \ref{bitna}.
\section{Heronian friezes}
\begin{definition}\cite[Definition 2.3]{fs}\label{def:def100}
A \textit{Heronian diamond} is an ordered $10$-tuple of complex numbers $(a,b,c,d,e,f,p,q,r,s)$ satisfying the following equations:\begin{equation} p^2=H(b,c,e) \end{equation}
\begin{equation} q^2=H(a,d,e)\end{equation}
\begin{equation} r^2=H(a,f,b)\end{equation}
\begin{equation} s^2=H(c,f,d)\end{equation}
\begin{equation} r+s=p+q \end{equation}
\begin{equation} 4ef=(p+q)^2+(a-b+c-d)^2\end{equation}
\begin{equation} e(r-s)=p(a-d)+q(b-c),\end{equation} where $$
 H(x,y,z):= -x^2-y^2-z^2+2xy+2xz+2yz.$$ Instead of listing the components of a Heronian diamond as a row of $10$ numbers, we will typically arrange them in a diamond pattern as in \cite{fs} and as shown in Figure \ref{fig1}.
 \begin{figure}
 \centering
 \includegraphics[scale=1] {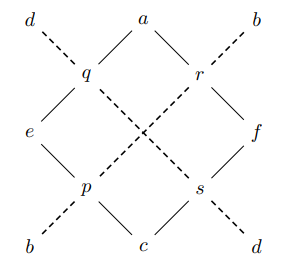}
 \caption {A Heronian diamond. Here, $b$ and $d$ are associated to the dashed lines extending the bimedians of the diamond. The remaining eight numbers are placed at the vertices of the diamond and at the midpoints of its sides. }
\label{fig1}
 \end{figure}
 \label{def}
 \end{definition}
 Motivated by Definition \ref{def}, in \cite{fs} the authors introduced the definition of a \textit{Heronian frieze}: roughly speaking, a Heronian frieze is a collection of complex numbers arranged  as in Figure \ref{figura102}, and satisfying the Heronian diamond equations for all the diamonds in the pattern (plus some boundary conditions). 
\begin{figure}
\centering
\includegraphics[scale=0.2]{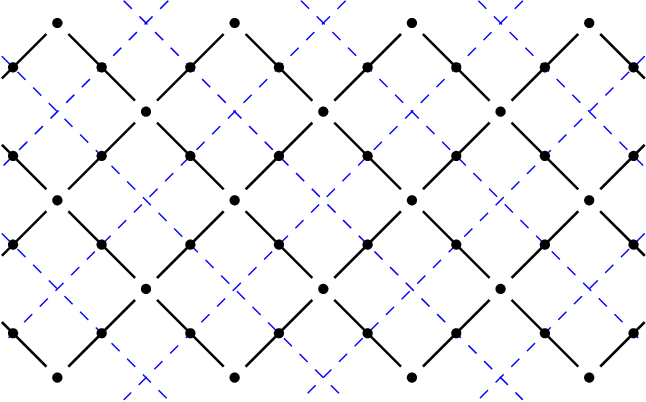}
\caption{The combinatorial pattern underlying a Heronian frieze of order $n=4$}
\label{figura102}
\end{figure}
Before passing on to a formal definition of Heronian frieze, let us first introduce a definition of a  labeled polygon, and some more motivation behind the actual notion of a Heronian diamond.
\begin{definition} Let $n \geq 3$ be an integer. Then a labeled \textit{polygon} (specifically an $n$-gon) in the complex plane $A$ is an ordered $n$-tuple of vertices $P = (A_1, A_2, ..., A_n) \in A^n$.
    \end{definition}
    \noindent
 Let $(A_1, A_2, ..., A_n)$ be a  labeled polygon in the complex plane. Then let $x_{ij}$ denote the squared distance between points $A_i$ and $A_j$, and $S_{ijk}$ denote four times the signed area of the triangle $A_iA_jA_k$, where $i,j, k$ $\in\{1,2,...,n\}$. \\ Namely, if $A_m = (x_m,y_m)$, for $m \in \{i,j,k\}$, then (as defined in \cite{fs})
\begin{equation}
x_{ij}: = (x_j - x_i)^2 + (y_j - y_i)^2, \label{eqx}
\end{equation} 
and 
\begin{equation}S_{ijk} := 2[(x_j - x_i)(y_k - y_i) - (y_j - y_i)(x_k - x_i)]\label{eqs}.
\end{equation}
[If  $i \leq 0$, $j \leq 0$, or $k \leq 0$,  we adopt a convention that $S_{ijk} = x_{ij} = 1$.]
\begin{remark}
When the vertices $A_i, A_j, A_k$ are ordered anticlockwise, $S_{ijk}$ is positive, i.e. equal to four times the actual area of the triangle. Moreover, the following equalities hold in addition \cite{fs}: 
\begin{equation}
S_{ijk} = - S_{ikj} = - S_{jik} = S_{jki} = S_{kij} = - S_{kji}
\end{equation}
\label{rem13}
\end{remark}
As explained in \cite{fs}, the motivation for the definition of a Heronian diamond actually came from triangulating a quadrilateral. Namely, a quadrilateral $(A_1, A_2, A_3, A_4)$ has two triangulations, involving diagonals $A_1A_3$ and $A_2A_4$, respectively. 
Figure \ref{fig2} shows these two triangulations, along with their respective measurement data:
\begin{equation} a = x_{14}, b = x_{12}, c = x_{23}, d = x_{34}, e = x_{13}, f = x_{24}, \end{equation}
\begin{equation} p = S_{123}, q = S_{134}, r = S_{124}, s = S_{234}.\end{equation}
\begin{figure}
\centering
\includegraphics[scale=1]{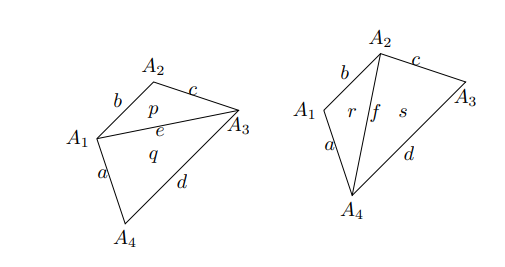}
\caption{Two triangulations of a plane quadrilateral}
\label{fig2}
\end{figure} 
Fomin and Setiabrata have shown \cite[Prop. 2.8]{fs} that the measurements (11) and (12) satisfy the equations (1) -- (7). Hence, the measurements related to the two triangulations of a $4$-gon give rise to a Heronian diamond, and represent a geometrical motivation for introducing this notion in general. Using this fact and previously introduced notation for squared distances and four times signed areas, a Heronian diamond for a quadruple of vertices $A_i, A_j, A_k, A_l$ can be represented as in Figure \ref{fig3}. 
\begin{remark} \label{labela}
As indicated in Figure \ref{fig3}, we will be labelling a Heronian diamond corresponding to vertices $A_i,A_j,A_k,A_l$ by $ijkl$.
\end{remark}
 Motivated by Definition ~\ref{def:def100}, Fomin and Setiabrata also introduce a notion of a \textit{Heronian frieze}. Here, we give the simplified version of the definition from \cite{fs} (as restated in \cite{key2}).
\begin{definition}
A \textit{Heronian frieze of order $n$} is a collection of complex numbers $z_{ij}, \tilde{z}_{ijk}$, for $i,j,k \in \{1,2,...,n\}$, arranged in a pattern shown in Figure \ref{friz}, such that every diamond of the pattern is Heronian, in the sense of Definition \ref{def:def100}. In addition, we impose the boundary conditions:
\begin{equation}
    z_{ii} = \tilde{z}_{iji} = \tilde{z}_{iij} = \tilde{z}_{ijj} = 0,
    \label{bound}
\end{equation}
for $i,j$ distinct elements of $\{1,2,...,n\}$.
\label{frizz}
 \end{definition}
 If all the entries associated to the dashed lines of the Heronian frieze of order $n$ are equal, i.e. $z_{12} = z_{23} = z_{34} = ... = z_{(n-1)n} = z_{n1}$, we say that the frieze is \textit{equilateral} \cite{fs}.
\begin{remark}Note that every entry of a Heronian frieze of order $n$ is in one of the forms $z_{ij}$, $\tilde{z}_{i,i+1,j}$, or $\tilde{z}_{i,j,j+1}$, where $i, j \in \{1,2,...,n\}$, and addition is modulo $n$.
\label{napomena}
\end{remark}
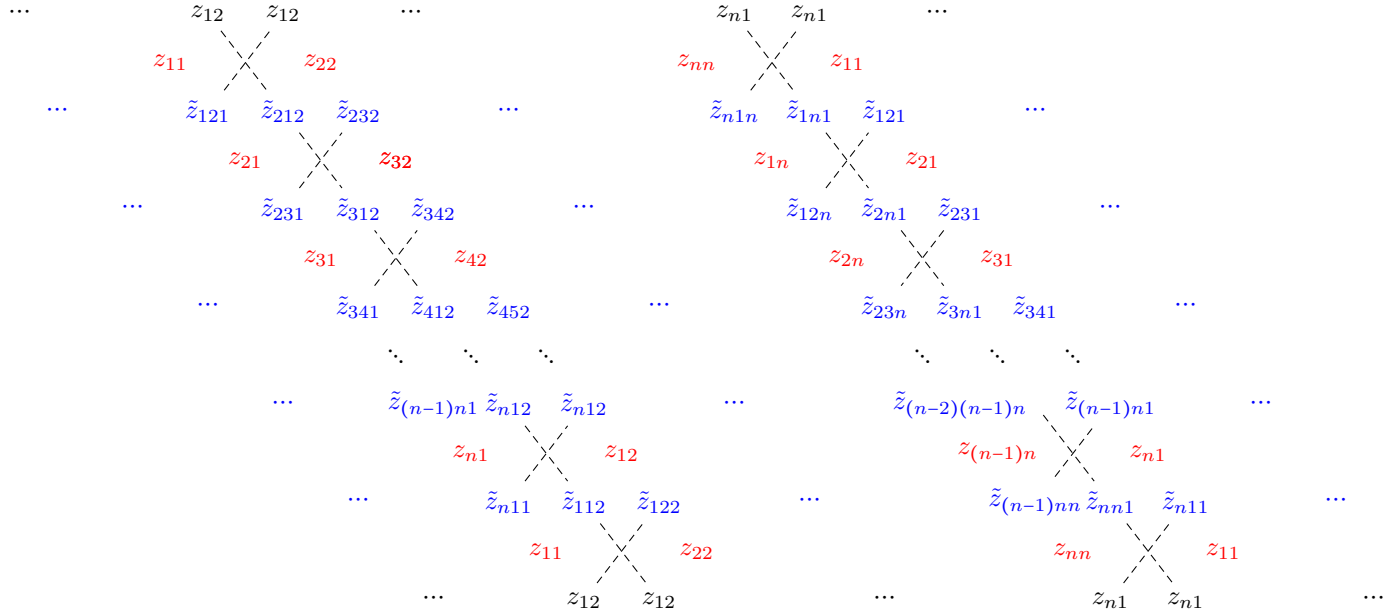
\begin{figure}
\begin{center}
\hspace*{-1.6cm}
\begin{tikzpicture}[xscale=1,yscale=1.3]
  \node[red,very thick](n1) at (4,4) {$z_{11}$};
   \node[blue,very thick](n2) at (4.5,3.5) {$\tilde{z}_{121}$};
   \node[blue,very thick](n15951951) at (2.5,3.5){$\cdots$};
   \node[draw=none] (n51621712) at (5.5,4.5) {$z_{12}$};
    \draw[densely dashed] (n2) -- (n51621712);
   \node[draw=none] (n51621712) at (4.5,4.5) {$z_{12}$};
\node[draw=none](n598515914151) at (2,4.5){$\cdots$};
   \node[draw=none] (n5262722) at (7.2,4.5) {$\cdots$};
    \node[red,very thick](n3) at (5,3) {$z_{21}$};
    \node[blue,very thick](n5) at (5.5,2.5) {$\tilde{z}_{231}$};
\node[blue,very thick](n591991) at (3.5,2.5){$\cdots$};
    \node[red,very thick](n4) at (6,2) {$z_{31}$};
    \node[blue,very thick](n55) at (6.5,1.5) {$\tilde{z}_{341}$};
    \node[blue,very thick](n8581581) at (4.5,1.5){$\cdots$};
    \node[red,very thick](n9) at (8,0) {$z_{n1}$};
    \node[draw=none](n12) at (7,1) {$\ddots$};
    
    \node[blue,very thick](n10) at (7.5,0.5) {$\tilde{z}_{(n-1)n1}$};
    \node[blue,very thick](n591561) at (5.5,0.5){$\cdots$};
    \node[blue,very thick](n7) at (8.5,-0.5) {$\tilde{z}_{n11}$};
   \node[blue,very thick](n419419) at (6.5,-0.5){$\cdots$};
  
    \node[red,very thick](n8) at (9,-1) {$z_{11}$};
    \node[draw=none] (n14151) at (9.5,-1.5) {$z_{12}$};
   \node[draw=none](n5188511) at (7.5,-1.5){$\cdots$};
    \node[red,very thick](n13) at (6,4) {$z_{22}$};
    
    \node[blue,very thick](n8043879) at (8.5,3.5) {$\cdots$};
    \node[blue,very thick](n17) at (6.5,3.5) {$\tilde{z}_{232}$};
\draw[densely dashed](n5) -- (n17);
    \node[red,very thick](n14) at (7,3) {$z_{32}$};
    \node[blue,very thick](n87) at (7.5,2.5) {$\tilde{z}_{342}$};
    \draw[densely dashed](n55)--(n87);
    \node[blue,very thick](n956116) at (9.5,2.5){$\cdots$};
  \node[blue,very thick](n951991) at (12.5,2.5){$\tilde{z}_{12n}$};
  \node[blue,very thick](n1161996119) at (12.5,3.5){$\tilde{z}_{1n1}$};
  \node[blue,very thick](n599519961) at (11.5,3.5){$\tilde{z}_{n1n}$};
  \node[draw=none](n4194190) at (12.5,4.5){$z_{n1}$};
  \draw[densely dashed](n599519961)--(n4194190);
  \node[draw=none](n95159159) at (14.2,4.5){$\cdots$};
  \node[draw=none](n419151) at (11.5,4.5){$z_{n1}$};
  \draw[densely dashed](n419151)--(n1161996119);
  \node[red,very thick](n519519151) at (11,4){$z_{nn}$};
  \node[blue,very thick](n4949494491149) at (13.5,3.5){$\tilde{z}_{121}$};
  \draw[densely dashed](n951991) -- (n4949494491149);
  \node[blue,very thick](n599519859) at (15.5,3.5){$\cdots$};
  \node[red,very thick](n95159119) at (12,3){$z_{1n}$};
  \node[red,very thick](n5959595) at (13,2){$z_{2n}$};
  \node[red,very thick](n919599) at (13,4){$z_{11}$};
  \node[red,very thick](n4194149) at (14,3){$z_{21}$};
  \node[blue,very thick](n151951519) at (14.5,2.5){$\tilde{z}_{231}$};
  \node[blue,very thick](n915851591) at (16.5,2.5){$\cdots$};
  \node[red,very thick](n51519951) at (15,2){$z_{31}$};
  \node[blue,very thick](n595155151) at (15.5,1.5){$\tilde{z}_{341}$};
  \node[blue,very thick](n915115) at (17.5,1.5){$\cdots$};
  \node[draw=none](n591551166) at (16,1){$\ddots$};
  \node[blue,very thick](n51519518) at (16.5,0.5){$\tilde{z}_{(n-1)n1}$};
  \node[blue,very thick](n4185851) at (18.5,0.5){$\cdots$};
  \node[red,very thick](n591992592) at (17,0){$z_{n1}$};
  \node[blue,very thick](n949195) at (13.5,2.5){$\tilde{z}_{2n1}$};
  \draw[densely dashed](n1161996119) -- (n949195);
  \node[blue,very thick](n95151591) at (14.5,1.5){$\tilde{z}_{3n1}$};
  \node[draw=none](n95519196) at (15,1){$\ddots$};
  \draw[densely dashed](n949195) -- (n95151591);
  \node[blue,very thick](n95151541) at (13.5,1.5){$\tilde{z}_{23n}$};
  \draw[densely dashed](n151951519) -- (n95151541);
  \node[draw=none](n916916916) at (14,1){$\ddots$};
\node[blue,very thick](n915195141) at (14.5,0.5){$\tiny{\tilde{z}_{(n-2)(n-1)n}}$};
\node[blue,very thick](n59111581) at (15.5,0.5){};
\node[red,very thick](n599150) at (15,0){$z_{(n-1)n}$};
\node[blue,very thick](n95519581) at (15.5,-0.5){$\tilde{z}_{(n-1)nn}$};
\draw[densely dashed](n51519518) -- (n95519581);
\node[red,very thick](n699234113) at (16,-1){$z_{nn}$};
\node[draw=none](n91515151) at (16.5,-1.5){$z_{n1}$};
\node[blue,very thick](n59191591559) at (16.5,-0.5){ $ \tilde{z}_{nn1}$};
\draw[densely dashed](n59111581)--(n59191591559);
\node[blue,very thick](n95916661511) at (17.5,-0.5){$\tilde{z}_{n11}$};
\node[blue,very thick](n491951941) at (19.5,-0.5){$\cdots$};
\draw[densely dashed](n91515151)--(n95916661511);
\node[red,very thick](n544949449494) at (18,-1){$z_{11}$};
\node[draw=none](n499151161) at (17.5,-1.5){$z_{n1}$};
\node[draw=none](n519951915) at (20,-1.5){$\cdots$};
\draw[densely dashed](n59191591559) -- (n499151161);
    \node[red,very thick](n14) at (8,2) {$z_{42}$};
    \node[blue,very thick](n14) at (8.5,1.5) {$\tilde{z}_{452}$};
    \node[blue,very thick](n195551161) at (10.5,1.5){$\cdots$};
    \node[draw=none](n22) at (9,1) {$\ddots$};
    \node[blue,very thick](n24) at (9.5,0.5) {$\tilde{z}_{n12}$};
    \node[blue,very thick](n951815) at (11.5,0.5){$\cdots$};
    \draw[densely dashed] (n7) -- (n24); 
    \node[red,very thick](n65) at (10,0) {$z_{12}$};
    \node[blue,very thick](n32) at (10.5,-0.5) {$\tilde{z}_{122}$};
    \node[blue,very thick](n1816619) at (12.5,-0.5){$\cdots$};
     \draw[densely dashed] (n14151) -- (n32);
    \node[red,very thick](n90) at (11,-1) {$z_{22}$};
    \node[blue,very thick](n90) at (5.5,3.5) {$\tilde{z}_{212}$};
       \draw[densely dashed] (n51621712) -- (n90);
    \node[blue,very thick](n19) at (6.5,2.5) {$\tilde{z}_{312}$};
    \draw[densely dashed](n90)--(n19);
    \node[blue,very thick](n14) at (7.5,1.5) {$\tilde{z}_{412}$};
    \draw[densely dashed](n14)--(n19);
    \node[draw=none](n14) at (8,1) {$\ddots$};
    \node[blue,very thick](n18) at (8.5,0.5) {$\tilde{z}_{n12}$};
    \node[blue,very thick](n99) at (9.5,-0.5) {$\tilde{z}_{112}$};
    \draw[densely dashed](n18)--(n99);
    \node[draw=none] (n151616) at (10.5,-1.5) {$z_{12}$};
    \draw[densely dashed](n99)--(n151616);
    \node[draw=none] (n92592962) at (13.5,-1.5) {$\cdots$};
    \node[red,very thick](n14) at (7,3) {$z_{32}$};
\end{tikzpicture}
\end{center}
\caption{Heronian frieze of order $n$}
\label{friz}
\end{figure}

Using the fact that measurements related to triangulations of a quadrilateral give rise to a Heronian diamond, the authors of \cite{fs} also state the following.

 \begin{figure}
\begin{center}
\begin{tikzpicture}
 \node[draw=none](n1) at (0,0) {$x_{il}$};
 \node[draw=none](n2) at (1,-1) {$S_{ijl}$};
 \node[draw=none](n3) at (2,-2) {$x_{jl}$};
 \node[draw=none](n4) at (1,-3) {$S_{jkl}$};
 \node[draw=none](n5) at (0,-4) {$x_{jk}$};
 \coordinate[label=${\mathbf{ijkl}}$] (c1) at (0,-2.4);
 \node[draw=none](n6) at (-1,-3){$S_{ijk}$};
 \node[draw=none](n7) at (-2,-2){$x_{ik}$};
 \node[draw=none](n8) at (-1,-1){$S_{ikl}$};
 \node[draw=none](n9) at (-2,-4){$x_{ij}$};
 \node[draw=none](n10) at (2,-4){$x_{kl}$};
 \node[draw=none](n11) at (-2,0){$x_{kl}$};
 \node[draw=none](n12) at (2,0){$x_{ij}$};
 
 \draw[-] (n1) to (n2);
 \draw[-] (n2) to (n3);
 \draw[-] (n3) to (n4);
 \draw[-] (n4) to (n5);
 \draw[-] (n5) to (n6);
 \draw[-] (n6) to (n7);
 \draw[-] (n7) to (n8);
 \draw[-] (n8) to (n1);
 \draw [densely dashed ] (n2) -- (n6);
 \draw[densely dashed] (n4) -- (n8);
 \draw[densely dashed] (n2) -- (n12);
 \draw[densely dashed] (n6) -- (n9);
 \draw[densely dashed] (n8) -- (n11);
 \draw[densely dashed] (n4) -- (n10);
\end{tikzpicture}
\end{center} 
\caption{A Heronian diamond for a quadruple of vertices $A_i, A_j, A_k, A_l$}
\label{fig3}
\end{figure}
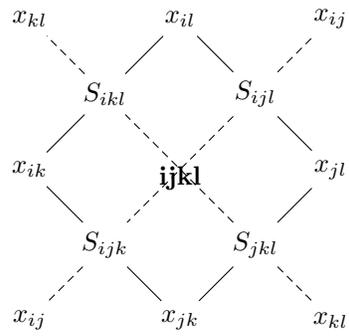 
\begin{prop}\cite{fs}
 Any $n$-gon $P$ in the complex plane gives rise to a Heronian frieze of order $n$, as given in the Figure \ref{friz}, in the following way:
\begin{equation*}
    z_{ij}:= x_{ij},
    \end{equation*}
    \begin{equation*}
    \tilde{z}_{i,i+1,j}:= S_{i,i+1,j},
\end{equation*}
\begin{equation*}
    \tilde{z}_{i,j,j+1}:= S_{i,j,j+1},
\end{equation*}
where $i, j \in \{1,2,...,n\}$, addition is modulo $n$, and the $x_{ij}$, $S_{i,i+1,j}$ and $S_{i,j,j+1}$ are as in \eqref{eqx} and \eqref{eqs}.\\
(Note that the boundary conditions \eqref{bound} hold since the squared distance between a vertex and itself equals zero. Similarly, the signed area of a triangle with two equal vertices is zero.)
\label{prop15}
\end{prop}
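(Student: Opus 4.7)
The plan is to reduce the statement to the single-quadrilateral fact \cite[Prop.~2.8]{fs} recalled just before Remark \ref{labela}: for any four points $A_i, A_j, A_k, A_l$ of the plane, the ten numbers
\[
x_{ij},\; x_{jk},\; x_{kl},\; x_{il},\; x_{ik},\; x_{jl},\; S_{ijk},\; S_{ikl},\; S_{ijl},\; S_{jkl}
\]
form a Heronian diamond in the sense of Definition \ref{def:def100}. Each elementary diamond of the frieze in Figure \ref{friz} is built from ten entries of exactly this shape, so the proposition should follow by applying this quadrilateral result to every diamond of the pattern and then checking the boundary conditions \eqref{bound}.

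First I would fix, for each elementary diamond of Figure \ref{friz}, the quadruple $(i,j,k,l)$ of polygon indices (taken modulo $n$, as in Remark \ref{napomena}) that labels it. Comparing Figure \ref{friz} with Figure \ref{fig3} position by position, the substitutions $z_{rs} \mapsto x_{rs}$ and $\tilde z_{rst} \mapsto S_{rst}$ turn the ten entries of the diamond into the ten measurements of the quadrilateral $(A_i, A_j, A_k, A_l)$ displayed in Figure \ref{fig3}. Applying \cite[Prop.~2.8]{fs} diamond by diamond then produces the Heronian diamond equations (1)--(7) for the whole frieze.

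Next I would verify the boundary conditions \eqref{bound}. These are immediate from the definitions \eqref{eqx} and \eqref{eqs}: $x_{ii} = 0$ by inspection, so $z_{ii} = 0$; and $S_{abc}$ with any two indices equal is the signed area of a degenerate triangle, which vanishes (alternatively, by the antisymmetry recorded in Remark \ref{rem13}). This covers each of $\tilde z_{iji}$, $\tilde z_{iij}$, and $\tilde z_{ijj}$.

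The main step is really just index bookkeeping; no serious analytic obstacle remains once the one-quadrilateral case is granted. The only mild subtlety to watch is that some diamonds near the "edges" of the pattern correspond to quadruples $(A_i, A_j, A_k, A_l)$ with repeated vertices, so several of the associated measurements are forced to be zero. In such degenerate cases one must check that (1)--(7) still hold, but this is automatic because \cite[Prop.~2.8]{fs} is stated for an arbitrary four points of the plane, with no non-degeneracy hypothesis. No further geometric input is required.
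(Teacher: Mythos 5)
Your argument is correct and is essentially the route the paper takes: the paper states this as a result from \cite{fs}, implicitly relying on \cite[Prop.~2.8]{fs} applied diamond by diamond (including the degenerate diamonds with repeated vertices noted in the remark after Definition \ref{def16}), and it records only the same boundary-condition check via \eqref{eqx} and \eqref{eqs} that you give. Nothing in your proposal deviates from or falls short of that argument.
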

\begin{definition}
 Given an $n$-gon $P = (A_1,A_2,...,A_n)$, we define a \textit{polygonal Heronian frieze of order $n$} to be a Heronian frieze as in Proposition \ref{prop15}. Such a frieze is shown in the Figure \ref{fig6}.
\label{def16}
\end{definition}
\begin{remark}
Every diamond of a polygonal Heronian frieze corresponds to a choice of a quadruple of, not necessarily distinct, vertices of a corresponding $n$-gon. The diamonds of the first and the last row of the frieze correspond to quadruples where two of the four vertices coincide. Using the fact that each diamond has a label (see Remark \ref{labela}), the pattern of diamond labels of a polygonal Heronian frieze of order $n$ is given in  Figure 5 in \cite{key2}.
\end{remark}
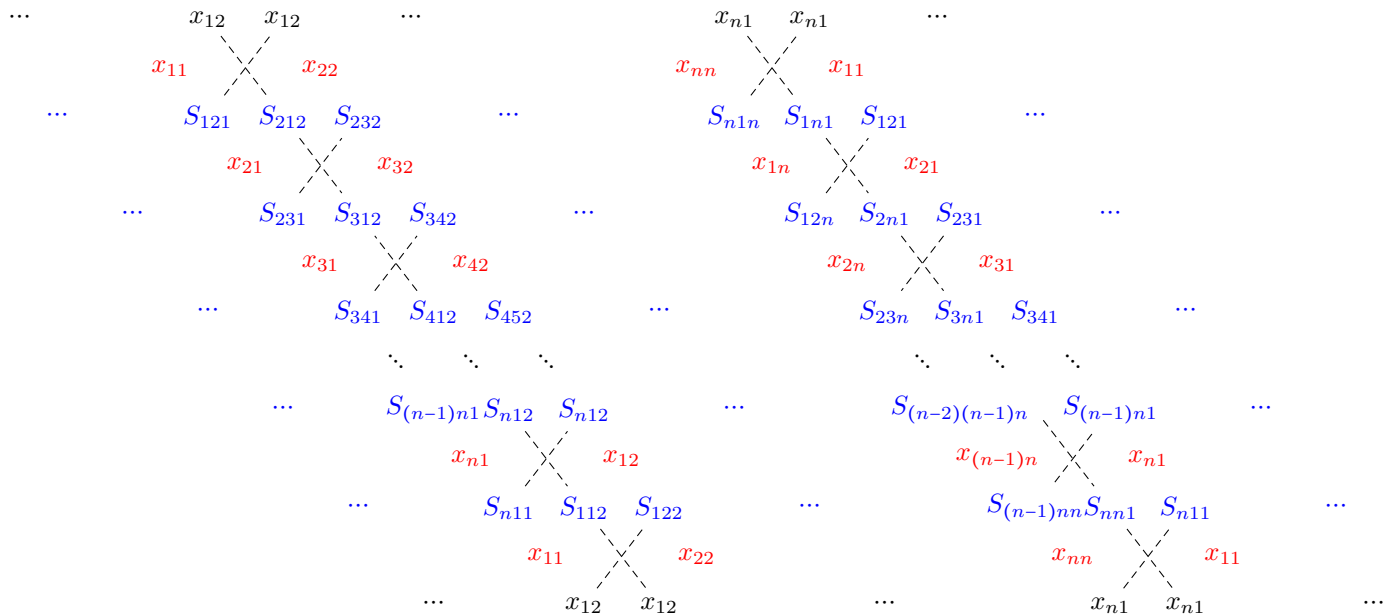
\begin{figure}
\begin{center}
\hspace*{-1.6cm}
\begin{tikzpicture}[xscale=1,yscale=1.3]
  \node[red,very thick](n1) at (4,4) {$x_{11}$};
   \node[blue,very thick](n2) at (4.5,3.5) {$S_{121}$};
   \node[blue,very thick](n15951951) at (2.5,3.5){$\cdots$};
   \node[draw=none] (n51621712) at (5.5,4.5) {$x_{12}$};
    \draw[densely dashed] (n2) -- (n51621712);
   \node[draw=none] (n51621712) at (4.5,4.5) {$x_{12}$};
\node[draw=none](n598515914151) at (2,4.5){$\cdots$};
   \node[draw=none] (n5262722) at (7.2,4.5) {$\cdots$};
    \node[red,very thick](n3) at (5,3) {$x_{21}$};
    \node[blue,very thick](n5) at (5.5,2.5) {$S_{231}$};
\node[blue,very thick](n591991) at (3.5,2.5){$\cdots$};
    \node[red,very thick](n4) at (6,2) {$x_{31}$};
    \node[blue,very thick](n55) at (6.5,1.5) {$S_{341}$};
    \node[blue,very thick](n8581581) at (4.5,1.5){$\cdots$};
    \node[red,very thick](n9) at (8,0) {$x_{n1}$};
    \node[draw=none](n12) at (7,1) {$\ddots$};
    
    \node[blue,very thick](n10) at (7.5,0.5) {$S_{(n-1)n1}$};
    \node[blue,very thick](n591561) at (5.5,0.5){$\cdots$};
    \node[blue,very thick](n7) at (8.5,-0.5) {$S_{n11}$};
   \node[blue,very thick](n419419) at (6.5,-0.5){$\cdots$};
  
    \node[red,very thick](n8) at (9,-1) {$x_{11}$};
    \node[draw=none] (n14151) at (9.5,-1.5) {$x_{12}$};
   \node[draw=none](n5188511) at (7.5,-1.5){$\cdots$};
    \node[red,very thick](n13) at (6,4) {$x_{22}$};
    
    \node[blue,very thick](n8043879) at (8.5,3.5) {$\cdots$};
    \node[blue,very thick](n17) at (6.5,3.5) {$S_{232}$};
\draw[densely dashed](n5) -- (n17);
    
    \node[blue,very thick](n87) at (7.5,2.5) {$S_{342}$};
    \draw[densely dashed](n55)--(n87);
    \node[blue,very thick](n956116) at (9.5,2.5){$\cdots$};
  \node[blue,very thick](n951991) at (12.5,2.5){$S_{12n}$};
  \node[blue,very thick](n1161996119) at (12.5,3.5){$S_{1n1}$};
  \node[blue,very thick](n599519961) at (11.5,3.5){$S_{n1n}$};
  \node[draw=none](n4194190) at (12.5,4.5){$x_{n1}$};
  \draw[densely dashed](n599519961)--(n4194190);
  \node[draw=none](n95159159) at (14.2,4.5){$\cdots$};
  \node[draw=none](n419151) at (11.5,4.5){$x_{n1}$};
  \draw[densely dashed](n419151)--(n1161996119);
  \node[red,very thick](n519519151) at (11,4){$x_{nn}$};
  \node[blue,very thick](n4949494491149) at (13.5,3.5){$S_{121}$};
  \draw[densely dashed](n951991) -- (n4949494491149);
  \node[blue,very thick](n599519859) at (15.5,3.5){$\cdots$};
  \node[red,very thick](n95159119) at (12,3){$x_{1n}$};
  \node[red,very thick](n5959595) at (13,2){$x_{2n}$};
  \node[red,very thick](n919599) at (13,4){$x_{11}$};
  \node[red,very thick](n4194149) at (14,3){$x_{21}$};
  \node[blue,very thick](n151951519) at (14.5,2.5){$S_{231}$};
  \node[blue,very thick](n915851591) at (16.5,2.5){$\cdots$};
  \node[red,very thick](n51519951) at (15,2){$x_{31}$};
  \node[blue,very thick](n595155151) at (15.5,1.5){$S_{341}$};
  \node[blue,very thick](n915115) at (17.5,1.5){$\cdots$};
  \node[draw=none](n591551166) at (16,1){$\ddots$};
  \node[blue,very thick](n51519518) at (16.5,0.5){$S_{(n-1)n1}$};
  \node[blue,very thick](n4185851) at (18.5,0.5){$\cdots$};
  \node[red,very thick](n591992592) at (17,0){$x_{n1}$};
  \node[blue,very thick](n949195) at (13.5,2.5){$S_{2n1}$};
  \draw[densely dashed](n1161996119) -- (n949195);
  \node[blue,very thick](n95151591) at (14.5,1.5){$S_{3n1}$};
  \node[draw=none](n95519196) at (15,1){$\ddots$};
  \draw[densely dashed](n949195) -- (n95151591);
  \node[blue,very thick](n95151541) at (13.5,1.5){$S_{23n}$};
  \draw[densely dashed](n151951519) -- (n95151541);
  \node[draw=none](n916916916) at (14,1){$\ddots$};
\node[blue,very thick](n915195141) at (14.5,0.5){$\tiny{S_{(n-2)(n-1)n}}$};
\node[blue,very thick](n59111581) at (15.5,0.5){};
\node[red,very thick](n599150) at (15,0){$x_{(n-1)n}$};
\node[blue,very thick](n95519581) at (15.5,-0.5){$S_{(n-1)nn}$};
\draw[densely dashed](n51519518) -- (n95519581);
\node[red,very thick](n699234113) at (16,-1){$x_{nn}$};
\node[draw=none](n91515151) at (16.5,-1.5){$x_{n1}$};
\node[blue,very thick](n59191591559) at (16.5,-0.5){ $ S_{nn1}$};
\draw[densely dashed](n59111581)--(n59191591559);
\node[blue,very thick](n95916661511) at (17.5,-0.5){$S_{n11}$};
\node[blue,very thick](n491951941) at (19.5,-0.5){$\cdots$};
\draw[densely dashed](n91515151)--(n95916661511);
\node[red,very thick](n544949449494) at (18,-1){$x_{11}$};
\node[draw=none](n499151161) at (17.5,-1.5){$x_{n1}$};
\node[draw=none](n519951915) at (20,-1.5){$\cdots$};
\draw[densely dashed](n59191591559) -- (n499151161);
    \node[red,very thick](n14) at (8,2) {$x_{42}$};
    \node[blue,very thick](n14) at (8.5,1.5) {$S_{452}$};
    \node[blue,very thick](n195551161) at (10.5,1.5){$\cdots$};
    \node[draw=none](n22) at (9,1) {$\ddots$};
    \node[blue,very thick](n24) at (9.5,0.5) {$S_{n12}$};
    \node[blue,very thick](n951815) at (11.5,0.5){$\cdots$};
    \draw[densely dashed] (n7) -- (n24); 
    \node[red,very thick](n65) at (10,0) {$x_{12}$};
    \node[blue,very thick](n32) at (10.5,-0.5) {$S_{122}$};
    \node[blue,very thick](n1816619) at (12.5,-0.5){$\cdots$};
     \draw[densely dashed] (n14151) -- (n32);
    \node[red,very thick](n90) at (11,-1) {$x_{22}$};
    \node[blue,very thick](n90) at (5.5,3.5) {$S_{212}$};
       \draw[densely dashed] (n51621712) -- (n90);
    \node[blue,very thick](n19) at (6.5,2.5) {$S_{312}$};
    \draw[densely dashed](n90)--(n19);
    \node[blue,very thick](n14) at (7.5,1.5) {$S_{412}$};
    \draw[densely dashed](n14)--(n19);
    \node[draw=none](n14) at (8,1) {$\ddots$};
    \node[blue,very thick](n18) at (8.5,0.5) {$S_{n12}$};
    \node[blue,very thick](n99) at (9.5,-0.5) {$S_{112}$};
    \draw[densely dashed](n18)--(n99);
    \node[draw=none] (n151616) at (10.5,-1.5) {$x_{12}$};
    \draw[densely dashed](n99)--(n151616);
    \node[draw=none] (n92592962) at (13.5,-1.5) {$\cdots$};
    \node[red,very thick](n14) at (7,3) {$x_{32}$};
\end{tikzpicture}
\end{center}
\caption{Polygonal Heronian frieze of order $n$}
\label{fig6}
\end{figure}
Now, we will establish what is needed to make a connection between minors of a certain matrix, which will be defined later in the text, and $S$ entries of a polygonal Heronian frieze. Then, we will use this connection to establish some relations that hold amongst the $S$ entries of a mentioned frieze. 
For that purpose, in the following two sections, we will be recalling some facts from \cite{key5}. 
\section{Exterior powers}
Let $V$ denote the complex vector space $\mathbb{C}^n$. Then $T(V)$ denotes the tensor algebra:
\begin{equation*}
    T(V) = V \oplus V \oplus V \oplus V \oplus ...
\end{equation*}
of $V$. The $\textit{exterior algebra}$ of $V$ is the quotient 
\begin{equation*}
    \bigwedge{(V)} = T(V)/J,
\end{equation*}
where $J$ denotes the ideal of $T(V)$ generated by elements $x \otimes x $ for $x \in V$. We denote the product in $\bigwedge(V)$ by $(x,y) \mapsto x \wedge y$ (recall that this comes from the tensor product in $T(V)).$ 
The $k$th exterior power, $\bigwedge^k(V)$ of $V$ is the subspace of $\bigwedge(V)$ spanned by the products of the form $v_1 \wedge v_2 \wedge \cdots \wedge v_k$, with $v_r \in V$ for $1 \leq r \leq k$. We have: 

\begin{equation*}
    \bigwedge(V) = \bigoplus_{k=0}^{\infty}{\bigwedge}^k{(V)}.
\end{equation*}

As usual, $e_1, e_2, ..., e_n$ denotes the natural basis of $V$, with $e_i(j) = \delta_{ij}$ for all $i,j$. Then the vectors 
\begin{equation*}
    e_{i_1}\wedge e_{i_2}\wedge \cdots \wedge e_{i_k},
\end{equation*}
with $1 \leq i_1 < i_2 < \cdots <i_k \leq n$ form a basis for $\bigwedge^k(V)$. For an element $x \in \bigwedge^k(V)$, we write the coefficient of $x$ when expended in terms of this basis by $p_{i_1,i_2,..,i_k}(x)$. Thus $p_{i_1,i_2,...,i_k}$ is a linear map from $\bigwedge^k(V)$ to $\mathbb{C}$. 

An element $x$ of $\bigwedge^k(V)$ is said to be \textit{decomposable} if it is of the form $x = v_1 \wedge v_2 \wedge ... \wedge v_k$ for some $k$, where $v_1, v_2, ..., v_k$ form a linearly independent set of vectors in $V$. Given such a linearly independent set $v_1,v_2,...,v_k$, we can form a $k \times n$ matrix $M$ of rank $k$ whose $i,j$ entry is $M_{ij} = (v_i)_j$, i.e. the $j$th entry of $v_i$, so that the rows are the vectors $v_i$. 
We denote the minor of $M$ corresponding to rows $a_1, a_2, ..., a_r$ and columns $b_1, b_2, ..., b_r$ by $\Delta_{b_1,b_2,...,b_r}^{a_1,a_2,...,a_r}(M)$. The following is well known and easy to check, using the expansion of the $v_i$ in terms of $e_1, e_2, ..., e_n$.
\begin{lemma}
Let $x = v_1 \wedge v_2 \wedge \cdots \wedge v_k$ be a decomposable element of $\bigwedge^k(V)$. Then $p_{i_1,i_2,...,i_k}(x)$ is the minor $\Delta_{i_1,i_2,...,i_k}^{1,2,...,k}$(M), where $M$ is the matrix defined above.
\label{lem11}
\end{lemma}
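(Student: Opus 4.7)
The plan is to expand each $v_i$ in the natural basis and use the alternating property of the wedge product to read off the coefficient of the basis element $e_{i_1}\wedge e_{i_2}\wedge\cdots\wedge e_{i_k}$; the resulting sum over permutations will be exactly the Leibniz expansion of the required minor. This is essentially bookkeeping, but the sign argument is the step that has to be done cleanly.

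First I would write $v_i=\sum_{j=1}^n M_{ij}\,e_j$ for each $i=1,\ldots,k$, using the very definition $M_{ij}=(v_i)_j$. Then, by the multilinearity of the wedge product, I would expand
\[
v_1\wedge v_2\wedge\cdots\wedge v_k \;=\; \sum_{(j_1,\ldots,j_k)\in\{1,\ldots,n\}^k} M_{1,j_1}M_{2,j_2}\cdots M_{k,j_k}\; e_{j_1}\wedge e_{j_2}\wedge\cdots\wedge e_{j_k}.
\]
Since $e_a\wedge e_a=0$ in $\bigwedge(V)$, all summands in which some index is repeated vanish, so the sum reduces to one over tuples $(j_1,\ldots,j_k)$ of distinct elements of $\{1,\ldots,n\}$.

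Next I would regroup this sum according to the underlying set $\{j_1,\ldots,j_k\}$. Fix indices $1\le i_1<i_2<\cdots<i_k\le n$. Any tuple $(j_1,\ldots,j_k)$ with underlying set $\{i_1,\ldots,i_k\}$ is of the form $j_r=i_{\sigma(r)}$ for a unique permutation $\sigma\in S_k$. Using the antisymmetry of the wedge product, we have
\[
e_{j_1}\wedge\cdots\wedge e_{j_k}\;=\;\mathrm{sgn}(\sigma)\,e_{i_1}\wedge\cdots\wedge e_{i_k}.
\]
Substituting this back, the coefficient of $e_{i_1}\wedge\cdots\wedge e_{i_k}$ in $x=v_1\wedge\cdots\wedge v_k$ is
\[
p_{i_1,i_2,\ldots,i_k}(x)\;=\;\sum_{\sigma\in S_k}\mathrm{sgn}(\sigma)\,M_{1,i_{\sigma(1)}}M_{2,i_{\sigma(2)}}\cdots M_{k,i_{\sigma(k)}}.
\]

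Finally, I would observe that the right hand side is precisely the Leibniz formula for the determinant of the $k\times k$ submatrix of $M$ obtained by selecting rows $1,2,\ldots,k$ and columns $i_1,i_2,\ldots,i_k$, i.e.\ the minor $\Delta^{1,2,\ldots,k}_{i_1,i_2,\ldots,i_k}(M)$. This gives the claimed identity. The only point that requires any care is the sign in the wedge rearrangement; once the convention $j_r=i_{\sigma(r)}$ is fixed, the correspondence with the Leibniz sign is immediate, so no genuine obstacle arises.
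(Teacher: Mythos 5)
Your proof is correct and is exactly the expansion argument the paper alludes to (the paper merely states the lemma is "well known and easy to check, using the expansion of the $v_i$ in terms of $e_1,\dots,e_n$"). You have simply written out the multilinearity, the vanishing of repeated-index terms, the sign bookkeeping via $j_r=i_{\sigma(r)}$, and the identification with the Leibniz formula for $\Delta^{1,\dots,k}_{i_1,\dots,i_k}(M)$, which is precisely what that remark leaves implicit.
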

\section{The Grassmannian}
The \textit{Grassmannian} $Gr(k,n)$ is the set of $k$-dimensional subspaces of $V$. If $U$ is such a subspace and $v_1,v_2,...,v_k$ is a basis of $U$, consider the decomposable element 
\begin{equation*}
    w = v_1 \wedge v_2 \wedge \cdots \wedge v_k \in {\bigwedge}^k(V).
\end{equation*}
This element is non-zero and does not depend upon the choice of basis, up to a non-zero scalar. It follows that the tuple $(p_{i_1,i_2,...,i_k}(w))_{1\leq i_1 < i_2 < \cdots < i_k \leq n}$ is a well defined element of the projective space $\mathbb{P}^N$, where $N = \binom{n}{k} - 1$. Thus we obtain a map $\varphi: Gr(k,n) \rightarrow \mathbb{P}^N$. The entries $p_{i_1,i_2,...,i_k}$ are reffered to as \textit{Plücker coordinates}.

We extend the definition of $p_{i_1,i_2,...,i_k}$ to arbitrary elements $i_1,i_2,...,i_k$ in $I$ by setting $p_{i_1,i_2,...,i_k} = 0$ if $i_r = i_s$ for some $r,s$ and setting 
\begin{equation*}
    p_{i_1,i_2,...,i_k} = sgn(\pi)p_{j_1,j_2,...,j_k}
\end{equation*}
if the $i_r$ are distinct,
\begin{equation*}
    \{i_1,i_2,...,i_k\} = \{j_1, j_2, ..., j_k\},
\end{equation*}
$j_1<j_2<...<j_k$ and $\pi$ is the permutation:
\begin{equation*}
\begin{pmatrix}
i_1 & i_2 & \cdots & i_k\\
j_1 & j_2 & \cdots & j_k
\end{pmatrix}.
\end{equation*}
The Plücker relations for $Gr(k,n)$ are the relations:
\begin{equation}
\label{plucker}
    \sum_{r=0}^{k}(-1)^r p_{i_1,i_2,...,i_{k-1},j_r}p_{j_0,...,\hat{j_r},...,j_k} = 0,
\end{equation}
where the sum is taken over all tuples satisfying $1 \leq i_1 <i_2<...<i_{k-1}\leq n$ and $1 \leq j_0 <j_1<...<j_k\leq n$, and the hat indicates omission. \\ 
As pointed out in \cite{key6}, the order of the elements in the tuple plays a significant role. We will often need to use ordered tuples, or partially ordered tuples. Hence, we will use the following notation \cite{key6}. For a tuple $J = \{i_1,i_2,...,i_l\}$ of $[1,n]$ and $\{a_1,...a_l\} = \{i_1,...,i_l\}$ with $1 \leq a_1 \leq ... a_l \leq n$ we set 
\begin{equation*}
    o(J) = o(i_1,...,i_l) = (a_1,...,a_l).
\end{equation*}
With this notation, if $0 \leq s,l \leq k$, we will write $p_{b_1,...,b_s,o(i_1,...,i_l),c_1,...,c_{k-l+a}}$ for $p_{b_1,...,b_s,a_1,...,a_l,c_1,...,c_{k-l+a}}$. \\ 
 Now we have all the prerequsities needed for establishing the above mentioned connection.
\section{Heronian minor relations}
\begin{definition}
Let $A$ be an $m \times n$ matrix. Then we define $m_{i_1i_2...i_m}$ to be a minor of the matrix $A$ that corresponds to the submatrix whose columns are $i_1,i_2,...,i_m$.
\end{definition}
\begin{lemma}
\label{lem1}
Let $F$ be a polygonal Heronian frieze rising from an $n$-gon $P = (A_1, A_2,..., A_n)$, where $A_m = (x_m, y_m)$, for $m \in \{1, 2, ..., n\}$, and let $C$ be the $3 \times  
 n$ matrix whose $m$-th column is $(1, x_m, y_m)$, for $m \in \{1,2, ..., n\}$. Then the minor $m_{i_1i_2i_3}$ is equal to the two times signed area of the triangle whose vertices are $A_{i_1}$, $A_{i_2}$ and $A_{i_3}$.
\end{lemma}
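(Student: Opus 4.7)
The plan is direct: write down the minor $m_{i_1 i_2 i_3}$ explicitly as the $3\times 3$ determinant of the submatrix of $C$ with columns indexed by $i_1, i_2, i_3$, and verify by expansion that it matches twice the signed area formula for the triangle $A_{i_1}A_{i_2}A_{i_3}$.

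More concretely, I would first write
\[
m_{i_1 i_2 i_3} = \det\begin{pmatrix} 1 & 1 & 1 \\ x_{i_1} & x_{i_2} & x_{i_3} \\ y_{i_1} & y_{i_2} & y_{i_3} \end{pmatrix},
\]
then expand along the first row to get
\[
m_{i_1 i_2 i_3} = (x_{i_2}y_{i_3} - x_{i_3}y_{i_2}) - (x_{i_1}y_{i_3} - x_{i_3}y_{i_1}) + (x_{i_1}y_{i_2} - x_{i_2}y_{i_1}).
\]
After rearranging, this is the classical shoelace expression
\[
(x_{i_1}y_{i_2} - x_{i_2}y_{i_1}) + (x_{i_2}y_{i_3} - x_{i_3}y_{i_2}) + (x_{i_3}y_{i_1} - x_{i_1}y_{i_3}),
\]
which is well known to equal twice the signed area of the triangle with vertices $A_{i_1}, A_{i_2}, A_{i_3}$.

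To close the loop with the notation of the paper, I would then compare this expression with the quantity $S_{i_1 i_2 i_3}$ defined in \eqref{eqs}: expanding $(x_{i_2}-x_{i_1})(y_{i_3}-y_{i_1}) - (y_{i_2}-y_{i_1})(x_{i_3}-x_{i_1})$ and simplifying shows that the cross terms involving $x_{i_1}y_{i_1}$ cancel and what remains is exactly the shoelace sum above. Hence $S_{i_1 i_2 i_3} = 2\, m_{i_1 i_2 i_3}$, which, since $S_{i_1 i_2 i_3}$ is four times the signed area, means $m_{i_1 i_2 i_3}$ is twice the signed area, as claimed. There is no real obstacle here; the whole argument is an elementary determinant computation, and the only care needed is to track the sign conventions (using Remark \ref{rem13}) so that the equality with twice the \emph{signed} area, rather than just the absolute area, is correctly established.
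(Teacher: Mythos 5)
Your proposal is correct and follows essentially the same route as the paper, which simply invokes the well-known determinant formula for the signed area of a triangle; you merely spell out the row expansion and the comparison with $S_{i_1 i_2 i_3}$ from \eqref{eqs} explicitly. No gap to report.
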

\begin{proof}
Follows easily from the very well known formula for the signed area of the triangle.
\end{proof}
\begin{definition} 
Let $P$, $F$ and $C$ be a polygon, Heronian frieze, and matrix, respectively,  as in Lemma \ref{lem1}. Then we say that $C$ is the \textit{coordinate matrix} of $F$ (or $P$).
\end{definition}
 Since, as stated earlier, the $S$ entries of the polygonal Heronian frieze represent 4 times the  signed areas of triangles, it follows that some $3 \times 3$ minors of the matrix $C$ give rise to the $S$ entries of the frieze-namely, two times minor $m_{ijk}$ equals the $S_{ijk}$ entry of the frieze, i.e.
 \begin{equation*}
     2  m_{ijk} = S_{ijk}.
 \end{equation*}
 However, each $S$ entry of the frieze is either of the form $S_{a,a+1,b}$, or $S_{a,b,b+1}$, where $a,b \in \{1,2,...,n\}$ and addition is modulo $n$. It follows that there is a restriction on the $3 \times 3$ minors of matrix $C$  giving rise to the $S$ entries of the frieze. Hence the following definition.
\begin{definition}
\label{her_minor}
Let $F$ be a polygonal Heronian frieze rising from an $n$-gon $P=(A_1,A_2,...,A_n)$. If a minor of the coordinate matrix  $C$ is in one of the following forms \begin{itemize}
 \item $m_{a,a+1,b}$
 \item $m_{a,b,b+1}$,
 \end{itemize}
 where $a,b \in \{1,2,...,n\}$ and addition is modulo $n$, 
then we say that it is a \textit{Heronian minor}.
\end{definition}
The signifance of relating $S$ entries of the frieze $F$ with minors of the matrix $C$ lies in the fact that the former satisfy certain relations. Hence the following theorem. 
\begin{theorem} Let $C$ be a $3 \times n$ coordinate matrix of the polygonal Heronian frieze $F$. Then the following holds for its minors:
\begin{equation}\label{eqn_plucker}
 \sum_{r=0}^{3} (-1)^r m_{i_1,i_2, j_r}m_{j_0,...,\hat{j_r},...,j_3} = 0,
\end{equation}
where the sum is taken over all the tuples satisfying $1 \leq i_1 < i_2 \leq n$ and $1 \leq j_0 < j_1 < j_2 <j_3 \leq n$, and the hat indicates omission. 
\end{theorem}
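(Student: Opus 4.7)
The approach is to recognise the $3\times 3$ minors of $C$ as Plücker coordinates of a point of $Gr(3,n)$ and then quote the Plücker relations \eqref{plucker} with $k=3$.

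First I would view the three rows of $C$ as vectors $v_1, v_2, v_3 \in V = \mathbb{C}^n$ and form the element
\[
 w = v_1 \wedge v_2 \wedge v_3 \in {\bigwedge}^{3}(V).
\]
By Lemma \ref{lem11}, for $1 \le i_1 < i_2 < i_3 \le n$ the Plücker coordinate $p_{i_1,i_2,i_3}(w)$ equals the minor $\Delta^{1,2,3}_{i_1,i_2,i_3}(C)$, which by the definition in Section 4 is precisely $m_{i_1 i_2 i_3}$. The extension convention $p_{i_1,i_2,i_3} = \operatorname{sgn}(\pi)\, p_{j_1,j_2,j_3}$ for permutations of distinct indices matches the behaviour of minors under column permutations, and the convention $p_{i_1,i_2,i_3}=0$ when two indices coincide matches the vanishing of $3\times 3$ minors with a repeated column. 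Hence the identification $p_{i_1,i_2,i_3}(w) = m_{i_1,i_2,i_3}$ is valid for arbitrary (not necessarily ordered or distinct) triples.

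Next I would apply the Plücker relations. If the rows $v_1,v_2,v_3$ are linearly independent, then $w$ is a nonzero decomposable element of $\bigwedge^3(V)$ corresponding to a point of $Gr(3,n)$, and its Plücker coordinates satisfy the Plücker relations \eqref{plucker} with $k=3$; substituting $p_{\cdot,\cdot,\cdot}(w) = m_{\cdot,\cdot,\cdot}$ yields exactly \eqref{eqn_plucker}. If the rows are linearly dependent, then $w=0$ and every $m_{i_1,i_2,i_3}$ vanishes, so \eqref{eqn_plucker} holds trivially. Either way we obtain the stated identity for every admissible choice of $i_1 < i_2$ and $j_0 < j_1 < j_2 < j_3$.

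Since the content is essentially a specialisation of \eqref{plucker} to the decomposable element determined by $C$, there is no genuine obstacle; the only thing to be careful with is the sign and indexing convention, i.e.\ verifying that the coordinate-wise identification $p_{i_1,i_2,i_3}(w) = m_{i_1,i_2,i_3}$ is compatible both with the antisymmetric extension used in Section 3 and with the terms $m_{j_0,\dots,\hat{j_r},\dots,j_3}$ in \eqref{eqn_plucker}, whose indices are automatically increasing after removal of $j_r$. This bookkeeping is the only step that requires attention, and once it is carried out the theorem follows immediately from \eqref{plucker} and Lemma \ref{lem11}.
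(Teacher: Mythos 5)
Your proposal is correct and follows essentially the same route as the paper: identify the $3\times 3$ minors of $C$ with the Plücker coordinates of $v_1\wedge v_2\wedge v_3$ via Lemma \ref{lem11} and then specialise the Plücker relations \eqref{plucker} to $k=3$. Your extra care with the sign/indexing conventions and with the degenerate case of linearly dependent rows (where all minors vanish and the relation is trivial) only makes explicit what the paper's brief proof leaves implicit.
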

\begin{proof}It follows from \eqref{plucker} that the Plücker relations for $Gr(3,n)$ are the following
\begin{equation*}
  \sum_{r=0}^{3} (-1)^r p_{i_1,i_2, j_r}p_{j_0,...,\hat{j_r},...,j_3} = 0,
\end{equation*}
where the sum is taken over all tuples satisfying $1 \leq i_1 < i_2 \leq n$ and $1 \leq j_0 < j_1 < j_2 <j_3 \leq n$, and the hat indicates omission.
Since Plücker coordinates are defined in terms of minors (see Lemma \ref{lem11}), the statement follows.
\end{proof}
\begin{definition}
\label{her_mnr_rln}
Let $F$ be a polygonal Heronian frieze of order $n$, and let $C$ be its coordinate matrix. If relation \eqref{eqn_plucker} is such that all the minors appearing in it are Heronian (up to a sign), then we say that it is a \textit{Heronian minor relation}.
\end{definition}
\begin{remark} The only minors appearing in the relation, which are the negatives of Heronian minors, are those of the form $m_{1,n,b}$, where $b \in \{1,...,n\}$.
\end{remark}
 Having assumed that we are given a polygonal Heronian frieze $F$ and its coordinate matrix $C$, we have that Heronian minor relations give relations between some of the $S$ entries of the frieze.\\ Namely, if \eqref{eqn_plucker} is a Heronian minor relation, where $(i_1,i_2)=(1, n)$, then by multiplying it by $4$, we get
 \begin{equation}
     -S_{n,1,j_0}S_{j_1,j_2,j_3}+S_{n,1,j_1}S_{j_0,j_2,j_3}-S_{n,1,j_2}S_{j_0,j_1,j_3}+S_{n,1,j_3}S_{j_0,j_1,j_2} = 0,
     \label{srln}
 \end{equation}
 in case $(i_1,i_2)=(1,n)$, and 
  \begin{equation}
     S_{i_1i_2j_0}S_{j_1j_2j_3}-S_{i_1i_2j_1}S_{j_0j_2j_3}+S_{i_1i_2j_2}S_{j_0j_1j_3}-S_{i_1i_2j_3}S_{j_0j_1j_2} = 0,
     \label{srln1}
 \end{equation}
 in case $(i_1,i_2) \neq (1,n)$.
\begin{definition}
Having a Heronian minor relation \eqref{eqn_plucker}, we will be referring to, depending on the case, either \eqref{srln} or \eqref{srln1}, as to  \textit{the corresponding $S$-relation}.
\end{definition}
\begin{theorem} 
Let $F$ be a polygonal Heronian frieze of order $n$, and let $C$ be its coordinate matrix. Assume that the choice of tuples $1 \leq i_1 < i_2 \leq n$ and $1 \leq j_0 < j_1 < j_2 <j_3 \leq n$ in \eqref{eqn_plucker} is such that \\ $(i_1,i_2) \notin \{(j_0,j_1),(j_1,j_2),(j_2,j_3),(j_0,j_3)\}$. \\ \\
(a) If $\{i_1,i_2\} \cap \{j_0,j_1,j_2,j_3\} = \emptyset$, then \eqref{eqn_plucker} is a Heronian minor relation if and only if one of the following holds:
\begin{itemize} 
\item
\begin{equation*}
i_1=1, i_2=n, j_1=j_0+1,j_3=j_2+1,
\end{equation*}
and the corresponding $S$-relation is \begin{equation*}
    S_{n,1,j_0}S_{j_0+1,j_2,j_2+1}-S_{n,1,j_0+1}S_{j_0,j_2,j_2+1}+S_{n,1,j_2}S_{j_0,j_0+1,j_2+1}-S_{n,1,j_2+1}S_{j_0,j_0+1,j_2} = 0,
\end{equation*}
where $j_0<j_2-1$.
\item  \begin{equation*}
    1\leq i_1 \leq n-1, i_2=i_1+1, j_1=j_0+1, j_3=j_2+1,
\end{equation*}
and the corresponding $S$-relation is 
\begin{equation*}
   S_{i_1,i_1+1,j_0}S_{j_0+1,j_2,j_2+1}-S_{i_1,i_1+1,j_0+1}S_{j_0,j_2,j_2+1}+S_{i_1,i_1+1,j_2}S_{j_0,j_0+1,j_2+1}-S_{i_1,i_1+1,j_2+1}S_{j_0,j_0+1,j_2}=0,
\end{equation*}
where $j_0<j_2-1$.

\item 
\begin{equation*}
1 < i_1 < n-1, i_2=i_1+1, j_0=1, j_2=j_1+1, j_3 =n,
\end{equation*}
and the corresponding $S$-relation is 
\begin{equation*}
    S_{i_1,i_1+1,1}S_{j_1,j_1+1,n}-S_{i_1,i_1+1,j_1}S_{n,1,j_1+1}+S_{i_1,i_1+1,j_1+1}S_{n,1,j_1}-S_{i_1,i_1+1,n}S_{1,j_1,j_1+1}=0,
\end{equation*}
where $1<j_1<n$.
\end{itemize}
(b) If $\{i_1,i_2\} \cap \{j_0,j_1,j_2,j_3\}=\{j_0\}$, then \eqref{eqn_plucker} is a Heronian minor relation if and only if one of the following holds:
\begin{itemize}
\item 
\begin{equation*}
    i_1=1, i_2=2, j_0=1, j_2=j_1+1, j_3=n, 
\end{equation*}
and the corresponding $S$-relation is
\begin{equation*}
    -S_{1,2,j_1}S_{n,1,j_1+1}+S_{1,2,j_1+1}S_{n,1,j_1}-S_{1,2,n}S_{1,j_1,j_1+1}=0,
\end{equation*}
where $1<j_1<n$.
\item 
\begin{equation*}
    i_1=1, i_2=n, j_0=1, j_1=2, j_3=j_2+1,
\end{equation*}
and the corresponding $S$-relation is 
\begin{equation*}
    S_{n,1,2}S_{1,j_2,j_2+1}-S_{n,1,j_2}S_{1,2,j_2+1}+S_{n,1,j_2+1}S_{1,2,j_2}=0,
\end{equation*}
where $2<j_2<n$.
\item 
\begin{equation*}
    1 \leq i_1 \leq n-1, i_2=i_1+1, j_0=i_1+1, j_1=i_1+2, j_3=j_2+1,
\end{equation*}
and the corresponding $S$-relation is 
\begin{equation*}
    S_{i_1,i_1+1,i_1+2}S_{i_1+1,j_2,j_2+1}+S_{i_1,i_1+1,j_2}S_{i_1+1,i_1+2,j_2+1}-S_{i_1,i_1+1,j_2+1}S_{i_1+1,i_1+2,j_2}=0,
\end{equation*}
where $i_1+2<j_2<n$.
\end{itemize}
(c) If $\{i_1,i_2\} \cap \{j_0,j_1,j_2,j_3\}=\{j_1\}$, then \eqref{eqn_plucker} is a Heronian minor relation if and only if one of the following holds:
\begin{itemize}
    \item \begin{equation*}
        1 < i_1 < n-1, i_2=i_1+1, j_0=1, j_1=i_1+1, j_2=i_1+2, j_3=n,
    \end{equation*}
    and the corresponding $S$-relation is
    \begin{equation*}
        S_{i_1,i_1+1,1}S_{i_1+1,i_1+2,n}+S_{i_1,i_1+1,i_1+2}S_{n,1,i_1+1}-S_{i_1,i_1+1,n}S_{1,i_1+1,i_1+2}=0,
    \end{equation*}
    where $1<i_1<n-1$.
    \item \begin{equation*}
        1 < i_1 \leq n-1, i_2=i_1+1, j_0=i_1-1, j_1=i_1, j_3=j_2+1,
    \end{equation*}
    and the corresponding $S$-relation is 
    \begin{equation*}
        S_{i_1,i_1+1,i_1-1}S_{i_1,j_2,j_2+1}+S_{i_1,i_1+1,j_2}S_{i_1-1,i_1,j_2+1}-S_{i_1,i_1+1,j_2+1}S_{i_1-1,i_1,j_2}=0,
    \end{equation*}
    where $1<i_1<j_2$.
\end{itemize}
(d) If $\{i_1,i_2\} \cap \{j_0,j_1,j_2,j_3\}=\{j_2\}$, then \eqref{eqn_plucker} is a Heronian minor relation if and only if one of the following holds:
\begin{itemize}
    \item 
    \begin{equation*}
        1 < i_1 < n-1, i_2=i_1+1, j_0=1, j_1=i_1-1, j_2=i_1, j_3=n,
    \end{equation*}
    and the corresponding $S$-relation is 
    \begin{equation*}
    S_{i_1,i_1+1,1}S_{i_1-1,i_1,n}-S_{i_1,i_1+1,i_1-1}S_{n,1,i_1}-S_{i_1,i_1+1,n}S_{1,i_1-1,i_1}=0,
    \end{equation*}
    where $1 < i_1 < n-1$.
    \item 
    \begin{equation*}
        1 \leq i_1 < n-1, i_2=i_1+1, j_1=j_0+1, j_2=i_1+1, j_3=i_1+2,
    \end{equation*}
    and the corresponding $S$-relation is 
    \begin{equation*}
        S_{i_1,i_1+1,j_0}S_{j_0+1,i_1+1,i_1+2}-S_{i_1,i_1+1,j_0+1}S_{j_0,i_1+1,i_1+2}-S_{i_1,i_1+1,i_1+2}S_{j_0,j_0+1,i_1+1}=0,
    \end{equation*}
    where $j_0<i_1$.
\end{itemize}
(e) If $\{i_1,i_2\} \cap \{j_0,j_1,j_2,j_3\} = \{j_3\}$, then \eqref{eqn_plucker} is a Heroninan minor relation if and only if one of the following holds:
\begin{itemize}
    \item 
    \begin{equation*}
        i_1=n-1, i_2=n, j_0=1, j_2=j_1+1, j_3=n,
    \end{equation*}
    and the corresponding $S$-relation is
    \begin{equation*}
        S_{n-1,n,1}S_{j_1,j_1+1,n}-S_{n-1,n,j_1}S_{n,1,j_1+1}+S_{n-1,n,j_1+1}S_{n,1,j_1} = 0,
    \end{equation*}
    where $1<j_1<n$.
    \item 
    \begin{equation*}
        i_1=1, i_2=n, j_1=j_0+1, j_2=n-1, j_3=n,
    \end{equation*}
    and the corresponding $S$-relation is 
    \begin{equation*}
        S_{n,1,j_0}S_{j_0+1,n-1,n}-S_{n,1,j_0+1}S_{j_0,n-1,n}+S_{n,1,n-1}S_{j_0,j_0+1,n}=0,
    \end{equation*}
    where $j_0<n-2$.
    \item 
    \begin{equation*}
        1 < i_1 \leq n-1, i_2=i_1+1, j_1=j_0+1, j_2=i_1-1, j_3 = i_1,
    \end{equation*}
    and the corresponding $S$-relation is 
    \begin{equation*}
        S_{i_1,i_1+1,j_0}S_{j_0+1,i_1-1,i_1}-S_{i_1,i_1+1,j_0+1}S_{j_0,i_1-1,i_1} + S_{i_1,i_1+1,i_1-1}S_{j_0,j_0+1,i_1}=0,
    \end{equation*}
    where $j_0<i_1-2$.
\end{itemize}
\label{main}
\end{theorem}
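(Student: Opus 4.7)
The plan is to reduce the theorem to a combinatorial classification based on the following elementary observation. A minor $m_{x,y,z}$ of the coordinate matrix $C$ with $1 \leq x < y < z \leq n$ equals, up to sign, a Heronian minor in the sense of Definition~\ref{her_minor} if and only if one of three consecutivity conditions holds: $y = x+1$, or $z = y+1$, or $(x,z) = (1,n)$. The first two correspond directly to the forms $m_{a,a+1,b}$ and $m_{a,b,b+1}$, while the third accounts for the wrap-around $m_{1,y,n} = m_{n,1,y}$, which is of the form $m_{a,a+1,b}$ with $a = n$ (addition modulo $n$). The sign discrepancy is precisely the one recorded in the remark after Definition~\ref{her_mnr_rln}.

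Under the standing hypothesis that $(i_1,i_2)$ avoids the four pairs $(j_0,j_1), (j_1,j_2), (j_2,j_3), (j_0,j_3)$, the Plücker relation \eqref{eqn_plucker} involves eight factors: the four left-hand minors $m_{i_1,i_2,j_r}$ for $r \in \{0,1,2,3\}$ and the four right-hand minors obtained by deleting one $j_r$ from $(j_0,j_1,j_2,j_3)$. For \eqref{eqn_plucker} to be a Heronian minor relation, each factor that does not vanish through a repeated index must, after reordering, satisfy one of the three consecutivity conditions above. I would organise the proof as a case analysis on the intersection $\{i_1,i_2\} \cap \{j_0,j_1,j_2,j_3\}$, matching the partition (a)--(e) of the statement: in parts (b)--(e), the two left-hand minors containing the common index automatically vanish, which reduces the number of consecutivity constraints to be imposed.

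Within each case, the execution is uniform. First, applying the three consecutivity conditions to the surviving right-hand minors $m_{j_r,j_s,j_t}$ severely restricts the gap pattern of $(j_0,j_1,j_2,j_3)$, typically forcing two consecutive gaps to equal one (as in $j_1 = j_0+1,\, j_3 = j_2+1$) or anchoring the tuple with $j_0 = 1,\, j_3 = n$ and one interior gap equal to one. Next, the consecutivity conditions on the surviving left-hand minors $m_{i_1,i_2,j_r}$ depend on the position of $j_r$ relative to $i_1, i_2$ (below, between, or above), and combining these with the constraints on the $j$-tuple produces precisely the configurations listed in the theorem. For sufficiency, substituting each prescribed configuration into \eqref{eqn_plucker}, multiplying by $4$, applying $2\,m_{ijk} = S_{ijk}$, and handling the sign change $m_{1,n,b} = -m_{n,1,b}$ in the wrap-around factors yields the stated $S$-relation, with any vanishing factor simply dropping out.

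The principal obstacle is bookkeeping rather than any single conceptual step. One must carefully track which of the eight factors vanish (especially in parts (b)--(e), where one of $i_1,i_2$ coincides with some $j_r$), monitor signs introduced when reordering $m_{1,n,b}$ to its Heronian representative $m_{n,1,b}$, and ensure that each conjunction of consecutivity constraints is compatible with the strict inequalities $1 \leq i_1 < i_2 \leq n$ and $1 \leq j_0 < j_1 < j_2 < j_3 \leq n$. A clean way to organise the enumeration is to fix, within each case, the relative position of $i_1, i_2$ inside the ordered quadruple $(j_0,j_1,j_2,j_3)$, and systematically eliminate possibilities one consecutivity condition at a time. The resulting argument is mechanical but extensive, and yields the two or three subcases listed under each of parts (a)--(e).
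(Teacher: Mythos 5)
Your overall strategy is the same as the paper's (test each factor of \eqref{eqn_plucker} for Heronian-ness via consecutivity, split into the cases (a)--(e) according to the intersection, enumerate, then convert to $S$-relations by $2m_{ijk}=S_{ijk}$ and the sign of $m_{1,n,b}$), but there is a genuine gap at the decisive step, namely your claim that imposing your consecutivity test on the left-hand factors $m_{i_1,i_2,j_r}$ and the right-hand factors ``produces precisely the configurations listed in the theorem.'' The problem is the test you state: you check whether each factor, \emph{after sorting its indices}, is a Heronian minor up to sign. For the left-hand factors this is strictly weaker than what the classification needs, because $m_{i_1,i_2,j_r}$ can pass your test through $j_r$ being cyclically adjacent to $i_1$ or $i_2$ even when $i_1,i_2$ are not adjacent; the sign incurred by the sorting is then \emph{not} of the single type recorded in the remark after Definition \ref{her_mnr_rln}, contrary to what you assert. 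Concretely, take $n=6$, $(i_1,i_2)=(2,5)$, $(j_0,j_1,j_2,j_3)=(1,3,4,6)$: every one of the eight factors ($m_{2,5,1}=m_{1,2,5}$, $m_{2,5,3}=-m_{2,3,5}$, $m_{2,5,4}=-m_{2,4,5}$, $m_{2,5,6}$, $m_{3,4,6}$, $m_{1,4,6}=m_{6,1,4}$, $m_{1,3,6}=m_{6,1,3}$, $m_{1,3,4}$) satisfies your sorted-consecutivity criterion, the standing hypothesis $(i_1,i_2)\notin\{(j_0,j_1),(j_1,j_2),(j_2,j_3),(j_0,j_3)\}$ holds, yet this configuration appears in none of the items of part (a). So either these ``neighbour'' configurations must be excluded by a sharper notion of which reorderings/signs are admissible, or your enumeration does not terminate at the stated list; your proposal does neither, and simply asserts the conclusion.

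The paper's proof turns exactly on the point you skip: it first isolates the condition $I$ (namely $i_2=i_1+1$, or $(i_1,i_2)=(1,n)$) and claims that \emph{all} the surviving left-hand factors are Heronian if and only if $I$ holds, reserving the sign allowance solely for $m_{1,n,b}=-m_{n,1,b}$; under that reading the claim follows because three or four distinct values $j_r$ cannot all lie in the two-element set $\{i_1-1,\,i_2+1\}$ of indices that would otherwise rescue a left factor. It then encodes the Heronian-ness of the right-hand factors as the conjunction of disjunctions \eqref{condition} in the statements $J_{r,s}$ and eliminates the possibilities by truth tables (Tables \ref{table1}--\ref{table3}), discarding the branches contradicting the intersection hypothesis, before writing out each surviving case and its $S$-relation. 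To repair your proposal you need (i) to fix the notion of ``Heronian up to sign'' actually used (cyclic reorderings plus only the $m_{1,n,b}$ sign flip) and prove the left-factor reduction to $I$ under it, and (ii) to actually carry out the elimination over the $j$-tuple in each of (a)--(e); as written, the proposal both leaves the enumeration undone and, more seriously, rests on a factor-by-factor criterion under which the asserted classification is false.
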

\begin{proof}  
Proof of (a): Assume that $\{i_1,i_2\} \cap \{j_0,j_1,j_2,j_3\} = \emptyset$. It is easy to see that the first parts of the summands of \eqref{eqn_plucker} are Heronian if and only if $1 \leq i_1 \leq n-1$ and $i_2=i_1+1$, or $i_1=1$ and $i_2=n$. Let us label this condition with $I$, i.e.
\begin{equation}
    I: = [(1 \leq i_1 \leq n-1 \land i_2=i_1+1) \lor (i_1=1 \land i_2=n)].
    \label{uslovvv}
\end{equation}
It is also not difficult to verify that the minors $m_{j_0,...,\hat{j_r},....,j_3}$, for $r \in \{0,1,2,3\}$, appearing in the sum \eqref{eqn_plucker} are Heronian if and only if the choice of the tuple $1 \leq j_0<j_1<j_2<j_3\leq n$ is such that 
\begin{multline}
\label{condition}
    (j_2 = j_1+1 \lor j_3 = j_2+1) \land ( j_3 = j_2+1 \lor j_0=j_3+1)\\ \land (j_1 = j_0+1 \lor j_0=j_3+1) \land (j_1 = j_0+1 \lor j_2 = j_1+1),
\end{multline}
 where equalities are modulo $n$. \\ \\ 
Now, for the ease of reference, let us introduce the following notation for the statement $j_{s}=j_{r}+1$ $(mod$ $n)$:
\begin{equation*}
     J_{r,s} : \quad j_{r}+1 = j_s\quad (mod \quad n).
    \end{equation*} 
Note that, in the case $(r,s) \neq (3,0)$, $J_{r,s}$ is equivalent to $j_{r}+1=j_s$, and in the case $(r,s)=(3,0)$, $J_{r,s}$ is equivalent to $j_3=n$ and $j_0=1$.\\ \\ 
With this notation, \eqref{condition} becomes:
\begin{equation}
    (J_{1,2} \lor J_{2,3}) \land ( J_{2,3} \lor J_{3,0}) \land (J_{0,1}  \lor J_{3,0}) \land (J_{0,1} \lor J_{1,2}).
    \label{eqnnn}
\end{equation}
It follows that \eqref{eqn_plucker} is a Heronian minor relation if and only if $I$ and 
\eqref{eqnnn}.\\ The statement \eqref{eqnnn} holds true if and only if at least one of the sixteen conjunctions of the fifth column of  Table \ref{table1} holds true.  
\begin{table}[h!]
 \centering
 \begin{tabular}{|c|c|c|c|c|c|c|c|c|c|c|}
 \hline
 $\mathbf{J_{1,2} \lor J_{2,3}}$ & $\mathbf{J_{2,3} \lor J_{30}}$ & $\mathbf{J_{0,1} \lor  J_{30}}$ & $\mathbf{J_{0,1} \lor J_{1,2}}$  & \textbf{conjuction}  \\
 \hline 
   $J_{1,2}$ & $J_{2,3}$ & $J_{0,1}$ & $J_{0,1}$ & $J_{0,1}\land J_{1,2}\land J_{2,3}$ 
 \\ \hline
 $J_{1,2}$ & $J_{2,3}$ & $J_{0,1}$ & $J_{1,2}$ & $J_{0,1}\land J_{1,2} \land J_{2,3}$
 \\ \hline
 $J_{1,2}$ & $J_{2,3}$ & $J_{3,0}$ & $J_{0,1}$ & $J_{0,1} \land J_{1,2} \land J_{2,3} \land J_{3,0}$ 
 \\ \hline
  $J_{1,2}$ & $J_{2,3}$ & $J_{3,0}$ & $J_{1,2}$ & $J_{1,2} \land J_{2,3} \land J_{3,0}$  \\
 \hline
 $J_{1,2}$ & $J_{3,0}$ & $J_{0,1}$ & $J_{0,1}$ & $J_{0,1} \land J_{1,2} \land J_{3,0} $ \\
 \hline
 $J_{1,2}$ & $J_{3,0}$ & $J_{0,1}$ & $J_{1,2}$ &   $J_{0,1} \land J_{1,2} \land J_{3,0}$ \\
 \hline
 $J_{1,2}$ & $J_{3,0}$ & $J_{3,0}$ & $J_{0,1}$ & $J_{0,1} \land J_{1,2} \land J_{3,0}$ \\ 
 \hline
 $J_{1,2}$ & $J_{3,0}$ & $J_{3,0}$ & $J_{1,2}$ & $J_{1,2}  \land J_{3,0}$  \\
 \hline
 $J_{2,3}$ & $J_{2,3}$ & $J_{0,1}$ & $J_{0,1}$ & $J_{0,1} \land J_{2,3}$ \\
 \hline
 $J_{2,3}$ & $J_{2,3}$ & $J_{0,1}$ & $J_{1,2}$ & $J_{0,1} \land J_{1,2} \land J_{2,3}$  \\
 \hline
 $J_{2,3}$ & $J_{2,3}$ & $J_{3,0}$ & $J_{0,1}$ & $J_{0,1} \land J_{2,3} \land J_{3,0}$  \\
 \hline
 $J_{2,3}$ & $J_{2,3}$ & $J_{3,0}$ & $J_{1,2}$ & $J_{1,2} \land J_{2,3} \land J_{3,0}$  \\
 \hline
 $J_{2,3}$ & $J_{3,0}$ & $J_{0,1}$ & $J_{0,1}$ & $J_{0,1} \land J_{2,3} \land J_{3,0}$ \\
 \hline 
 $J_{2,3}$ & $J_{3,0}$ & $J_{0,1}$ & $J_{1,2}$ & $J_{0,1} \land J_{1,2} \land J_{2,3} \land J_{3,0}$  \\
 \hline
 $J_{2,3}$ & $J_{3,0} $& $J_{3,0}$ & $J_{0,1}$ & $J_{0,1} \land J_{2,3} \land J_{3,0} $  \\
 \hline
 $J_{2,3}$ & $J_{3,0} $& $J_{3,0}$ & $J_{1,2}$ & $J_{1,2} \land J_{2,3} \land J_{3,0}$  \\
 \hline
 \end{tabular}
 \caption{}
 \label{table1}
 \end{table} 
From Table \ref{table1}, it follows that, if $\{i_1,i_2\} \cap \{j_0,j_1,j_2,j_3\} = \emptyset$, then \eqref{eqnnn} holds true if and only if 
 \begin{equation*}
     ( J_{0,1} \land J_{1,2} \land J_{2,3}) \lor ( J_{0,1} \land J_{2,3}) \lor (J_{12} \land J_{30}) \lor (J_{01} \land J_{23} \land J_{30}).
 \end{equation*}
 Hence, in case $\{i_1,i_2\} \cap \{j_0,j_1,j_2,j_3\} = \emptyset$, \eqref{eqn_plucker} is a Heronian minor relation if and only if 
 \begin{equation*}
     I \land [(J_{0,1} \land J_{1,2} \land J_{2,3}) \lor (J_{0,1} \land J_{2,3}) \lor (J_{12} \land J_{30}) \lor (J_{01} \land J_{23} \land J_{30})],
 \end{equation*}
 which is equivalent to $(I \land J_{0,1} \land J_{2,3}) \lor (I \land J_{1,2} \land J_{3,0})$. \\ \\
 If $I \land J_{0,1} \land J_{2,3}$, then we have either 
 \begin{itemize}
     \item 
 $i_1=1, i_2=n, j_1=j_0+1, j_3=j_2+1$; \\  \\ 
 the Heronian minor relation we get is 
 \begin{equation*}
     m_{1,n,j_o}m_{j_0+1,j_2,j_2+1}-m_{1,n,j_0+1}m_{j_0,j_2,j_2+1}+m_{1,n,j_2}m_{j_0,j_0+1,j_2+1}-m_{1,n,j_2+1}m_{j_0,j_0+1,j_2}=0,
 \end{equation*}
 and the corresponding $S$-relation is 
  \begin{equation*}
     S_{n,1,j_o}S_{j_0+1,j_2,j_2+1}-S_{n,1,j_0+1}S_{j_0,j_2,j_2+1}+S_{n,1,j_2}S_{j_0,j_0+1,j_2+1}-S_{n,1,j_2+1}S_{j_0,j_0+1,j_2}=0,
 \end{equation*}
 where $j_0<j_2-1$,  \\ \\  or 
 \item $1 \leq i_1 \leq n-1, i_2=i_1+1, j_1=j_0+1, j_3=j_2+1$; \\ \\ the Heronian minor relation we get is 
 \begin{equation*}
     m_{i_1,i_1+1,j_o}m_{j_0+1,j_2,j_2+1}-m_{i_1,i_1+1,j_0+1}m_{j_0,j_2,j_2+1}+m_{i_1,i_1+1,j_2}m_{j_0,j_0+1,j_2+1}-m_{i_1,i_1+1,j_2+1}m_{j_0,j_0+1,j_2}=0,
 \end{equation*}
 and the corresponding $S$-relation is 
  \begin{equation*}
         S_{i_1,i_1+1,j_o}S_{j_0+1,j_2,j_2+1}-S_{i_1,i_1+1,j_0+1}S_{j_0,j_2,j_2+1}+S_{i_1,i_1+1,j_2}S_{j_0,j_0+1,j_2+1}-S_{i_1,i_1+1,j_2+1}S_{j_0,j_0+1,j_2}=0, 
 \end{equation*}
 where $j_0<j_2-1$.
 \end{itemize}
 In the similar manner, if $I \land J_{1,2} \land J_{3,0}$, then we have either 
 \begin{itemize}
     \item $i_1=1,i_2=n,j_0=1,j_2=j_1+1, j_3=n$, which leads to contradiction, since we assumed that $\{i_1,i_2\} \cap \{j_0,j_1,j_2,j_3\} = \emptyset$, \\ \\ or
     \item $1< i_1 < n-1, i_2=i_1+1, j_0=1, j_2=j_1+1, j_3=n$. \\ \\
      After using that $m_{1,j_1+1,n}=m_{n,1,j_1+1}, m_{1,j_1,n}=m_{n,1,j_1}$, the Heronian minor relation we get is 
  \begin{equation*}
     m_{i_1,i_1+1,1}m_{j_1,j_1+1,n}-m_{i_1,i_1+1,j_1}m_{n,1,j_1+1}+m_{i_1,i_1+1,j_1+1}m_{n,1,j_1}-m_{i_1,i_1+1,n}m_{1,j_1,j_1+1}=0,
 \end{equation*}
 and the corresponding $S$-relation is 
  \begin{equation*}
         S_{i_1,i_1+1,1}S_{j_1,j_1+1,n}-S_{i_1,i_1+1,j_1}S_{n,1,j_1+1}+S_{i_1,i_1+1,j_1+1}S_{n,1,j_1}-S_{i_1,i_1+1,n}S_{1,j_1,j_1+1}=0,
 \end{equation*}
 where $1<j_1<n$,
 \end{itemize}
This completes proof of part (a). \\ \\ Proof of (b).
  Let us now assume that $\{i_1,i_2\} \cap \{j_0,j_1,j_2,j_3\} = \{j_0\}$. Then $m_{i_1i_2j_0} = 0$, so \eqref{eqn_plucker} becomes 
 \begin{equation}
     -m_{i_1i_2j_1}m_{j_0j_2j_3} + m_{i_1i_2j_2}m_{j_0j_1j_3}-m_{i_1i_2j_3}m_{j_0j_1j_2} = 0.
     \label{four}
 \end{equation}
As in the previous case, we conclude that the first parts of the summands of \eqref{eqn_plucker} are Heronian if and only if \eqref{uslovvv} holds. Furthermore, minors $m_{j_0,...,\hat{j_r},....,j_3}$, for $r \in \{0,1,2,3\}$ appearing in \eqref{four} are Heronian if and only if the choice of the tuple $1\leq j_0<j_1<j_2<j_3\leq n$ is such that 
\begin{equation}
    ( J_{2,3} \lor J_{3,0}) \land (J_{0,1} \lor   J_{3,0}) \land (J_{0,1} \lor J_{1,2}).
    \label{five}
\end{equation}
It follows that, in case $\{i_1,i_2\} \cap \{j_0,j_1,j_2,j_3\} = \{j_0\}$, \eqref{eqn_plucker} is a Heronian minor relation if and only if $I$ and \eqref{five}. Statement \eqref{five} holds if and only if at least one of the eight conjuctions of the fourth column of Table \ref{table2} holds true.
\begin{table}[h!]
 \centering
 \begin{tabular}{|c|c|c|c|c|c|c|c|c|c|c|}
 \hline
 $\mathbf{ J_{2,3} \lor J_{3,0}}$ & $\mathbf{J_{0,1} \lor J_{3,0}}$ &  $\mathbf{J_{0,1} \lor J_{1,2} }$  & \textbf{conjuction }   \\
 \hline 
   $J_{2,3}$ & $J_{0,1}$  & $J_{0,1}$ & $J_{0,1}\land J_{2,3}$ 
 \\ \hline
 $J_{2,3}$ & $J_{0,1}$ &  $J_{1,2}$ & $J_{0,1}\land J_{1,2} \land J_{2,3}$
 \\ \hline
 $J_{2,3}$ & $J_{3,0}$  & $J_{0,1}$ & $J_{0,1} \land J_{2,3} \land J_{3,0} $ 
 \\ \hline
  $J_{2,3}$ & $J_{3,0}$ &  $J_{1,2}$ & $J_{1,2} \land J_{2,3} \land J_{3,0} $  \\
 \hline
 $J_{3,0}$ & $J_{0,1}$  & $J_{0,1}$ &   $J_{0,1} \land J_{3,0}  $ \\
 \hline
 $J_{3,0}$ & $J_{0,1}$ &  $J_{1,2}$ & $J_{0,1} \land J_{1,2} \land J_{3,0}$ \\ 
 \hline
 $J_{3,0}$ & $J_{3,0}$ &  $J_{0,1}$ & $J_{0,1} \land J_{3,0}$ \\
 \hline
  $J_{3,0}$ & $J_{3,0}$ &  $J_{1,2}$ & $J_{1,2} \land J_{3,0}$  \\
 \hline
 \end{tabular}
 \caption{}
 \label{table2}
 \end{table} 
From Table \ref{table2}, it follows that (noting that we have assumed that $\{i_1,i_2\} \cap \{j_0,j_1,j_2,j_3\}=\{j_0\}$) \eqref{five} holds true if and only if 
\begin{multline*}
     ( J_{0,1} \land J_{2,3}) \lor ( J_{0,1} \land J_{1,2}\land J_{2,3}) \lor ( J_{0,1} \lor J_{2,3} \lor J_{3,0} ) \\  \lor (J_{1,2} \land J_{2,3} \land J_{3,0}) \lor (J_{0,1} \land J_{3,0}) \lor (J_{0,1} \land J_{1,2} \land J_{3,0}) \lor  (J_{1,2} \land J_{3,0}).
\end{multline*}
Hence, in case $\{i_1,i_2\} \cap \{j_0,j_1,j_2,j_3\}=\{j_0\}$, \eqref{eqn_plucker} is a Heronian minor relation if and only if 
\begin{multline*}
    I \land [( J_{0,1} \land J_{2,3}) \lor ( J_{0,1} \land J_{1,2}\land J_{2,3}) \lor ( J_{0,1} \lor J_{2,3} \lor J_{3,0} ) \\  \lor (J_{1,2} \land J_{2,3} \land J_{3,0}) \lor (J_{0,1} \land J_{3,0}) \lor (J_{0,1} \land J_{1,2} \land J_{3,0}) \lor  (J_{1,2} \land J_{3,0})
     ],
\end{multline*}
which is equivalent to 
\begin{equation*}
    (I\land J_{0,1} \land J_{3,0}) \lor (I \land J_{1,2} \land J_{3,0}) \lor (I \land J_{0,1} \land J_{2,3}).
\end{equation*}
If $I \land J_{0,1} \land J_{3,0}$, then we have either 
\begin{itemize}
\item $i_1=1,i_2=n,j_3=n,j_0=1, j_1=j_0+1=2$, which leads to a contradiction, due to the assumption that $\{i_1,i_2\} \cap \{j_0,j_1,j_2,j_3\} = \{j_0\},$ \\ \\ or
\item $1\leq i_1 \leq n-1, i_2=i_1+1, j_3=n, j_0=1, j_1= j_0+1=2$. \\ \\
Since $\{i_1,i_2\} \cap \{j_0,j_1,j_2,j_3\} = \{j_0\}$, it follows that either $i_1=1$, or $i_2=1$, and each of these cases leads to contradiction -  due to the facts that  $\{i_1,i_2\} \cap \{j_0,j_1,j_2,j_3\} = \{j_0\}$, and $i_1 \geq 1$, respectively.
\end{itemize}
In the case $I \land J_{1,2} \land J_{3,0}$, we have either 
\begin{itemize}
\item $i_1=1, i_2=n, j_3=n, j_0=1, j_2=j_1+1$, which leads to contradiction, since $\{i_1,i_2\} \cap \{j_0,j_1,j_2,j_3\}=\{j_0\},$ \\ \\ or
\item $1 \leq i_1 \leq n-1, i_2=i_1+1, j_3=n, j_0=1, j_2=j_1+1$. \\ Since $\{i_1,i_2\} \cap \{j_0,j_1,j_2,j_3\} = \{j_0\}$, it follows that either $i_1=1$ or $i_2=1$.\\ $i_2=1$ leads to contradiction, so we have that $i_1=1$.
\\ \\ 
After using that $m_{1,j_1+1,n} = m_{n,1,j_1+1}$ and $m_{1,j_1,n}=m_{n,1,j_1}$, the Heronian minor relation we get is 
\begin{equation*}
   -m_{1,2,j_1}m_{n,1,j_1+1}+m_{1,2,j_1+1}m_{n,1,j_1}-m_{1,2,n}m_{1,j_1,j_1+1}=0,
\end{equation*}
and the corresponding $S$-relation is 
\begin{equation*}
   -S_{1,2,j_1}S_{n,1,j_1+1}+S_{1,2,j_1+1}S_{n,1,j_1}-S_{1,2,n}S_{1,j_1,j_1+1}=0,
\end{equation*}
where $1<j_1<n$. 
\end{itemize}
We proceed similarly in the case $I \land J_{0,1} \land J_{2,3}$. Namely, we have either
\begin{itemize}
    \item $i_1=1,i_2=n,j_3=j_2+1,j_1=j_0+1$. \\ 
    Since $\{1,n\} \cap \{j_0,j_1,j_2,j_3\}=\{j_0\}$, and $j_0 < n$, we conclude that $j_0=1$, and hence $j_1=2$. \\ \\ 
    The Heronian minor relation we get is 
    \begin{equation*}
        -m_{1,n,2}m_{1,j_2,j_2+1}+m_{1,n,j_2}m_{1,2,j_2+1}-m_{1,n,j_2+1}m_{1,2,j_2}=0,
    \end{equation*}
    and the corresponding $S$-relation is 
    \begin{equation*}
         S_{n,1,2}S_{1,j_2,j_2+1}-S_{n,1,j_2}S_{1,2,j_2+1}+S_{n,1,j_2+1}S_{1,2,j_2}=0,
    \end{equation*}
    where $2<j_2<n$. \\ \\ or
    \item $1\leq i_1 \leq n-1, i_2=i_1+1, j_3=j_2+1, j_1=j_0+1$. \\ 
    Since $ \{i_1,i_2\} \cap \{j_0,j_1,j_2,j_3\}=\{j_0\}$, we conclude that $j_0=i_2=i_1+1$. \\ \\
    The Heronian minor relation we get is 
    \begin{equation*}
        -m_{i_1,i_1+1,i_1+2}m_{i_1+1,j_2,j_2+1}+m_{i_1,i_1+1,j_2}m_{i_1+1,i_1+2,j_2+1}-m_{i_1,i_1+1,j_2+1}m_{i_1+1,i_1+2,j_2}=0,
    \end{equation*}
    and the corresponding $S$-relation is 
    \begin{equation*}
        -S_{i_1,i_1+1,i_1+2}S_{i_1+1,j_2,j_2+1}+S_{i_1,i_1+1,j_2}S_{i_1+1,i_1+2,j_2+1}-S_{i_1,i_1+1,j_2+1}S_{i_1+1,i_1+2,j_2}=0, 
    \end{equation*}
    where $i_1+2<j_2<n$. 
\end{itemize}
This completes the proof of part (b). \\ \\ Proof of (c).
Assume now that $\{i_1,i_2\} \cap \{j_0,j_1,j_2,j_3\} = \{j_1\}$. Then $m_{i_1i_2j_1}=0$, so  \eqref{eqn_plucker} becomes
\begin{equation}
   m_{i_1i_2j_0}m_{j_1j_2j_3}+m_{i_1i_2j_2}m_{j_0j_1j_3}-m_{i_1i_2j_3}m_{j_0j_1j_2}=0.
   \label{nznm}
\end{equation} 
 As earlier in the proof, we conclude that \eqref{nznm} is a Heronian minor relation if and only if
\begin{equation*}
    I \land [(J_{12}\lor J_{23}) \land (J_{01}  \lor J_{30}) \land (J_{01} \lor J_{12})].
\end{equation*}

\begin{table}[h!]
 \centering
 \begin{tabular}{|c|c|c|c|c|c|c|c|c|c|c|}
 \hline
 $\mathbf{J_{1,2} \lor J_{2,3}}$ & $\mathbf{J_{0,1} \lor J_{3,0}}$ &  $\mathbf{J_{0,1} \lor J_{1,2}}$  & \textbf{conjuction }\\
 \hline 
   $J_{1,2}$ & $J_{0,1}$  & $J_{0,1}$ & $J_{0,1}\land J_{1,2}$ 
 \\ \hline
 $J_{1,2}$ & $J_{0,1}$ &  $J_{1,2}$ & $J_{0,1}\land J_{1,2} $ 
 \\ \hline
 $J_{1,2}$ & $J_{3,0}$  & $J_{0,1}$ & $J_{0,1} \land J_{1,2} \land J_{3,0} $ 
 \\ \hline
  $J_{1,2}$ & $J_{3,0}$ &  $J_{1,2}$ & $J_{1,2} \land J_{3,0} $  \\
 \hline
 $J_{2,3}$ & $J_{0,1}$ &  $J_{0,1}$ & $J_{0,1} \land J_{2,3} $  \\
 \hline
 $J_{2,3}$ & $J_{0,1}$  & $J_{1,2}$ &   $J_{0,1} \land J_{1,2} \land J_{2,3} $ \\
 \hline
 $J_{2,3}$ & $J_{3,0}$ &  $J_{0,1}$ & $J_{0,1} \land  J_{2,3} \land J_{3,0}$ \\ 
 \hline
 $J_{2,3}$ & $J_{3,0}$ &  $J_{1,2}$ & $J_{1,2} \land  J_{2,3} \land J_{3,0}$ \\ 
 \hline
 
 \end{tabular}
 \caption{}
 \label{table3}
 \end{table} 
 Using Table \ref{table3} and the same reasoning as in the previous cases, we get that, in case $ \{i_1,i_2\} \cap \{j_0,j_1,j_2,j_3\}=\{j_1\}$, \eqref{nznm} is a Heronian minor relation if and only if 
 \begin{multline*}
 I\land [ (J_{0,1} \land J_{1,2}) \lor (J_{0,1} \land J_{1,2} \land J_{3,0}) \lor (J_{1,2} \land J_{3,0}) \lor (J_{0,1} \land J_{2,3}) \\ \lor (J_{0,1} \land J_{1,2} \land J_{2,3}) \lor (J_{0,1} \land J_{2,3} \land J_{3,0}) \lor (J_{1,2} \land J_{2,3} \land J_{3,0})],
 \end{multline*}
which is equivalent to 
\begin{equation*}
   (I\land J_{0,1} \land J_{1,2}) \lor (I \land J_{1,2} \land J_{3,0}) \lor (I \land J_{0,1} \land J_{2,3}).
\end{equation*}
 If $I \land J_{0,1} \land J_{1,2}$, one can easily check that both 
 \begin{itemize}
     \item 
 $i_1=1 \land i_2=n \land J_{0,1} \land J_{1,2}$ , and 
 \item $1\leq i_1 \leq n-1 \land i_2=i_1+1 \land J_{0,1} \land J_{1,2}$
 \end{itemize} 
 lead to contradiction, since $\{i_1,i_2\} \cap \{j_0,j_1,j_2,j_3\} = \{j_1\}$.
 \\ \\ If $I \land J_{1,2} \land J_{3,0}$, then again, since $\{i_1,i_2\} \cap \{j_0,j_1,j_2,j_3\}=\{j_1\}$, we conclude that 
 \begin{equation*}
     1 < i_1 < n-1, i_2=i_1+1 \text{ \quad and \quad} (j_0,j_1,j_2,j_3) = (1,i_1+1,i_1+2,n).
 \end{equation*}
 After using that $m_{1,i_1+1,n}=m_{n,1,i_1+1}$, the Heronian minor relation we get is 
 \begin{equation*}
     m_{i_1,i_1+1,1}m_{i_1+1,i_1+2,n}+m_{i_1,i_1+1,i_1+2}m_{n,1,i_1+1}-m_{i_1,i_1+1,n}m_{1,i_1+1,i_1+2}=0,
 \end{equation*}
 and the corresponding $S$-relation is 
 \begin{equation*}
   S_{i_1,i_1+1,1}S_{i_1+1,i_1+2,n}+S_{i_1,i_1+1,i_1+2}S_{n,1,i_1+1}-S_{i_1,i_1+1,n}S_{1,i_1+1,i_1+2}=0,  
 \end{equation*}
 where $1<i_1<n-1$. \\  \\ 
In case $I \land J_{0,1} \land J_{2,3}$, by examining all the possible cases, we get to the conclusion that the only one that can hold is 
\begin{equation*}
    1< i_1 \leq n-1, i_2=i_1+1, j_0=i_1-1, j_1=i_1, j_3=j_2+1.
\end{equation*}
The Heronian minor relation we get is 
\begin{equation*}
    m_{i_1,i_1+1,i_1-1}m_{i_1,j_2,j_2+1}+m_{i_1,i_1+1,j_2}m_{i_1-1,i_1,j_2+1}-m_{i_1,i_1+1,j_2+1}m_{i_1-1,i_1,j_2}=0,
\end{equation*}
and the corresponding $S$-relation is 
\begin{equation*}
    S_{i_1,i_1+1,i_1-1}S_{i_1,j_2,j_2+1}+S_{i_1,i_1+1,j_2}S_{i_1-1,i_1,j_2+1}-S_{i_1,i_1+1,j_2+1}S_{i_1-1,i_1,j_2}=0,
\end{equation*}
where $1<i_1<j_2$.
\\ This completes proof of part (c), and one similarly proves parts (d) and (e).
\end{proof}
\begin{remark}
It is easy to check that,  when \\ $(i_1,i_2) \in \{(j_0,j_1),(j_1,j_2),(j_2,j_3), (j_0,j_3)\}$, the corresponding Heronian minor relation is trivially true.
\end{remark}
\begin{example} Let $P=(A_1,A_2,...,A_6)$, and let $F$ be the corresponding Heronian frieze. Then \begin{equation*}
m_{561}m_{234} - m_{562}m_{134} + m_{563}m_{124} - m_{564}m_{123} = 0
\end{equation*} is a Heronian minor relation, and the corresponding $S$-relation is : 
\begin{equation*}
  S_{561}S_{234} - S_{562}S_{134} + S_{563}S_{124} - S_{564}S_{123} = 0,   
\end{equation*}while
\begin{equation*}
m_{241}m_{256} - m_{242}m_{156} + m_{245}m_{126} - m_{246}m_{125} = 0  
\end{equation*}
\\
is not a Heronian minor relation. 
\end{example}
\begin{example} Let $P=(A_1,A_2,...,A_6)$, and let $F$ be a corresponding polygonal Heronian frieze. The list of all Heronian minor relations is the following:
\begin{align*}
    m_{162}m_{345} -m_{163}m_{245}+m_{164}m_{235}-m_{165}m_{234}&= 0; \\ 
    m_{123}m_{456}-m_{124}m_{356}+m_{125}m_{346}-m_{126}m_{345}&=0;\\
    m_{341}m_{256}-m_{342}m_{156}+m_{345}m_{126}-m_{346}m_{125}&=0; \\ 
    m_{561}m_{234}-m_{562}m_{134}+m_{563}m_{124}-m_{564}m_{123}&=0;\\ 
    m_{231}m_{456}-m_{234}m_{156}+m_{235}m_{146}-m_{236}m_{145}&=0; \\
    m_{451}m_{236}-m_{452}m_{136}+m_{453}m_{126}-m_{456}m_{123}&=0;\\
    -m_{123}m_{146}+m_{124}m_{136}-m_{126}m_{134}&=0;\\
    -m_{124}m_{156}+m_{125}m_{146}-m_{126}m_{145}&=0;\\
    -m_{123}m_{245}+m_{124}m_{235}-m_{125}m_{234}&=0;\\
    -m_{123}m_{256}+m_{125}m_{236}-m_{126}m_{235}&=0;\\
    -m_{234}m_{356}+m_{235}m_{346}-m_{236}m_{345}&=0;\\
    m_{231}m_{346}+m_{234}m_{136}-m_{236}m_{134}&=0;\\
    m_{341}m_{456}+m_{345}m_{146}-m_{346}m_{145}&=0;\\ 
    m_{341}m_{245}-m_{342}m_{145}-m_{345}m_{124}&=0;\\
    m_{451}m_{256}-m_{452}m_{156}-m_{456}m_{125}&=0;\\
    m_{452}m_{356}-m_{453}m_{256}-m_{456}m_{235}&=0;\\
    m_{561}m_{236}-m_{562}m_{136}+m_{563}m_{126}&=0;\\
    m_{561}m_{346}-m_{563}m_{146}+m_{564}m_{136}&=0.\\
\end{align*}
 \label{primjer}
\end{example}
Having a Heronian minor relation, it may also be of interest to give a graphical interpretation of the corresponding $S$-relation, in terms of the positions in the frieze.
Namely, one can notice that the choice of tuples $1 \leq i_1 < i_2 \leq n$ and $1 \leq j_0 < j_1 < j_2 <j_3 \leq n$ corresponds to the choice of a dashed line and a diamond of the frieze, respectively. Hence the following corollary.
\begin{corollary}
Each Heronian minor relation gives a relation between the $S$-entries of a diagonal (dashed line) and $S$-entries of a diamond of the Heronian frieze. \\ Namely, tuples $1 \leq i_1 < i_2 \leq n$ and $1 \leq j_0 < j_1 < j_2 <j_3 \leq n$, provided that they satisfy conditions of Theorem \ref{main}, give a relation between the entries $S_{i_1i_2j_0}, S_{i_1i_2j_1}, S_{i_1i_2j_2}, S_{i_1i_2j_3}$ of the diagonal $x_{i_1i_2}$ and the entries $S_{j_1j_2j_3}, S_{j_0j_2j_3}, \\ S_{j_0j_1j_3}, S_{j_0j_1j_2}$ of the diamond $j_0j_1j_2j_3$. This is due to the fact that the $S$-entries 'lying' on the dashed lines of a Heronian frieze are of the form \\ $S_{a,a+1,a+1}, S_{a,a+1,a+2},...,S_{a,a+1,n},...,S_{a,a+1,a}$, going from the south-west to the north-east, respectively, where $a \in \{1,2.,...,n\}$, and index addition is modulo $n$, as well as the fact that all the diamonds of the Heronian frieze of order $n$ correspond to the quadruples of vertices of form $(A_b,A_{b+1},A_c,A_{c+1})$, where $b,c$ are distinct elements of $\{1,2,...,n\}$ and addition is modulo $n$, as stated in Remark 1.10 in \cite{key2}. \\ The actual relation is either \eqref{srln} or \eqref{srln1}, depending on the choice of $(i_1,i_2)$.
The corresponding positions in the frieze are presented in  Figure \ref{figura} and Figure \ref{figuraaa}, depending on the tuple $(j_0,j_1,j_2,j_3)$ being equal to $(j_0,j_0+1,j_2,j_2+1)$, or $(1,j_1,j_1+1,n)$, respectively, where the same coloured $S$-entries are part of the same summand of the relation. 
\end{corollary}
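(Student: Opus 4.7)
The plan is to verify two visual identifications against Figure \ref{fig6}: that the first factors of each summand of the $S$-relation (\ref{srln}) or (\ref{srln1}) lie along the dashed diagonal tagged $x_{i_1 i_2}$, and that the second factors occupy the four blue positions of the diamond tagged $j_0 j_1 j_2 j_3$. Theorem \ref{main} has already restricted $(i_1, i_2)$ and $(j_0, j_1, j_2, j_3)$ to exactly the shapes compatible with such identifications, so the work reduces to matching labels with positions.

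For the diagonal half, I would observe that in every valid case of parts (a)--(e) of Theorem \ref{main}, the pair $(i_1, i_2)$ is either of the form $(a, a+1)$ with $a \in \{1, \ldots, n-1\}$ or equal to $(1, n)$; in either reading the two indices are cyclically consecutive modulo $n$. Reading off Figure \ref{fig6} one sees that the dashed diagonal labeled by the squared distance $x_{i_1 i_2}$ hosts exactly the entries $\tilde{z}_{i_1, i_2, b} = S_{i_1, i_2, b}$ for $b \in \{1, \ldots, n\}$, arranged from south-west to north-east. Hence $S_{i_1 i_2 j_0}, S_{i_1 i_2 j_1}, S_{i_1 i_2 j_2}, S_{i_1 i_2 j_3}$ are precisely the four entries of this diagonal selected by the column tuple.

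For the diamond half, I would invoke the labeling convention of Remark \ref{labela} and Figure \ref{fig3}: the diamond tagged $ijkl$ carries at its four blue positions the $S$-entries $S_{ijk}, S_{ijl}, S_{ikl}, S_{jkl}$. Setting $(i, j, k, l) = (j_0, j_1, j_2, j_3)$ identifies these with the second factors $S_{j_0 j_1 j_2}, S_{j_0 j_1 j_3}, S_{j_0 j_2 j_3}, S_{j_1 j_2 j_3}$ of (\ref{srln}) and (\ref{srln1}). It then remains to check, by case analysis of Theorem \ref{main}(a)--(e), that the allowed tuples $(j_0, j_1, j_2, j_3)$ are exactly the diamond labels of the frieze, namely those of the form $(b, b+1, c, c+1)$ or $(1, b, b+1, n)$. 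By Remark 1.10 of \cite{key2} these are precisely the labels arising from quadruples $(A_b, A_{b+1}, A_c, A_{c+1})$. The first family matches Figure \ref{figura} and the second matches the wrap-around diagram of Figure \ref{figuraaa}, completing the pictorial dispatch.

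The main obstacle is bookkeeping rather than mathematical depth. Care is needed in the wrap-around case $(j_0, j_1, j_2, j_3) = (1, j_1, j_1+1, n)$, which must be reinterpreted cyclically as the diamond with vertices $A_{j_1}, A_{j_1+1}, A_n, A_1$, and in the $(i_1, i_2) = (1, n)$ case, where the identities $m_{1,b,n} = m_{n,1,b}$ already used in the proof of Theorem \ref{main} force the $S$-relation to take the shape (\ref{srln}) rather than (\ref{srln1}). Once the sign tracking and cyclic relabelling are kept straight, each of the relations catalogued in Theorem \ref{main} manifestly describes a diagonal-versus-diamond configuration as asserted.
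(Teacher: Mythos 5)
Your proposal is correct and follows essentially the same route as the paper: the paper justifies the corollary exactly by noting that the dashed diagonal $x_{i_1 i_2}$ carries the entries $S_{i_1,i_2,b}$ (since Theorem \ref{main} forces $(i_1,i_2)$ to be cyclically consecutive) and that the admissible tuples $(j_0,j_1,j_2,j_3)$ are precisely the diamond labels $(b,b+1,c,c+1)$ or the wrap-around $(1,b,b+1,n)$, citing Remark 1.10 of \cite{key2}, with the two cases matching Figures \ref{figura} and \ref{figuraaa} and the relation taking the form \eqref{srln} or \eqref{srln1} according to whether $(i_1,i_2)=(1,n)$. Your bookkeeping of the wrap-around case and of the sign identities $m_{1,b,n}=m_{n,1,b}$ matches the paper's treatment, so no gap.
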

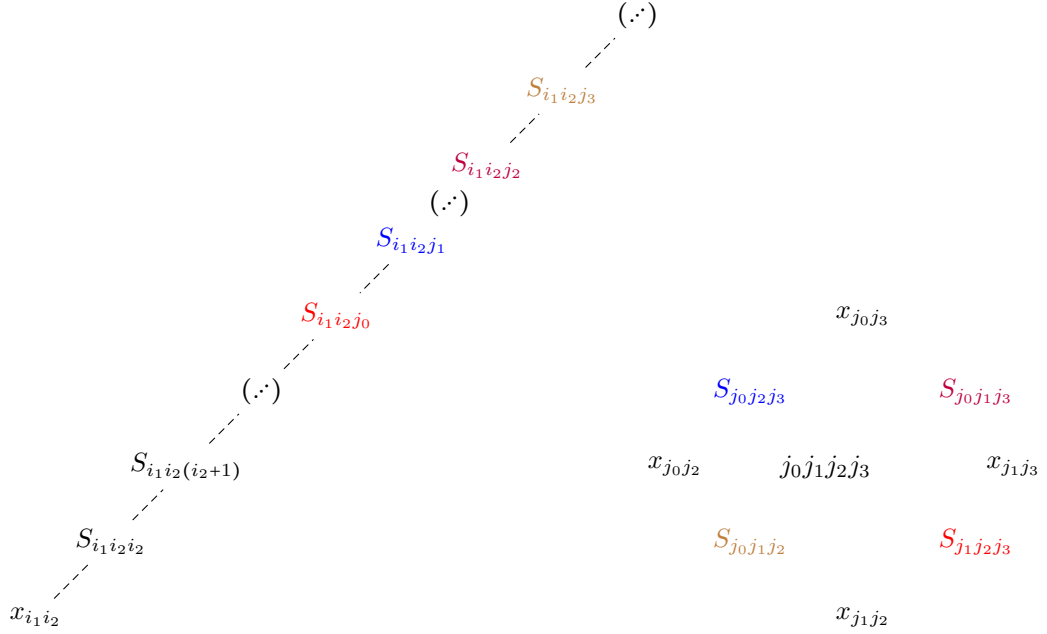
\begin{figure}
\begin{center}
\begin{tikzpicture}
  \node[draw=none](n24252) at (-0.5,0) {$x_{i_1i_2}$};    
  \node[draw=none](n2) at (0.5,1) {$S_{i_1i_2i_2}$};
   \node(n11) at (1.5,2) {$S_{i_1i_2(i_2+1)}$};
  \node(n17) at (2.5,3) {$(\udots)$}; 
  \node[red,very thick](n271) at (3.5,4) {$S_{i_1i_2j_0}$};
  \draw[densely dashed](n17)--(n271);
   \node[blue,very thick](n27) at (4.5,5) {$S_{i_1i_2j_1}$};
   \node(n1995911) at (5,5.5) {$(\udots)$};
   \node[purple,very thick](n415) at (5.5,6){$S_{i_1i_2j_2}$};
   
   \node[brown, very thick](n420) at (6.5,7){$S_{i_1i_2j_3}$};
    \node(n421) at (7.5,8){$(\udots)$};
   \draw[densely dashed](n420)--(n421);
   \draw[densely dashed](n415)--(n420);
   \draw[densely dashed](n27)--(n271);
   \draw[densely dashed] (n24252) -- (n2);
\draw[densely dashed] (n2) -- (n11);
\draw[densely dashed] (n11) -- (n17);
\node[draw=none](n59) at (8,2) {$x_{j_0j_2}$};
\node[draw=none](n103) at (10.5,4) {$x_{j_0j_3}$};
\node[blue,very thick](n104) at (9,3) {$S_{j_0j_2j_3}$};
  \node[draw=none] (n141415151) at (10,2) {\textbf{$j_0j_1j_2j_3$}};
\node[brown,very thick](n73) at (9,1) {$S_{j_0j_1j_2}$};
\node[draw=none](n106) at (12.5,2) {$x_{j_1j_3}$};
\node[purple,very thick](n106) at (12,3) {$S_{j_0j_1j_3}$};
\node[draw=none](n74) at (10.5,0) {$x_{j_1j_2}$};
\node[red,very thick](n107) at (12,1) {$S_{j_1j_2j_3}$};
\end{tikzpicture}
\end{center}
\caption{Graphical interpretation of the $S$-relation from Theorem \ref{main}\\ in case $(j_0,j_1,j_2,j_3)=(j_0,j_0+1,j_2,j_2+1)$}
\label{figura}
\end{figure}
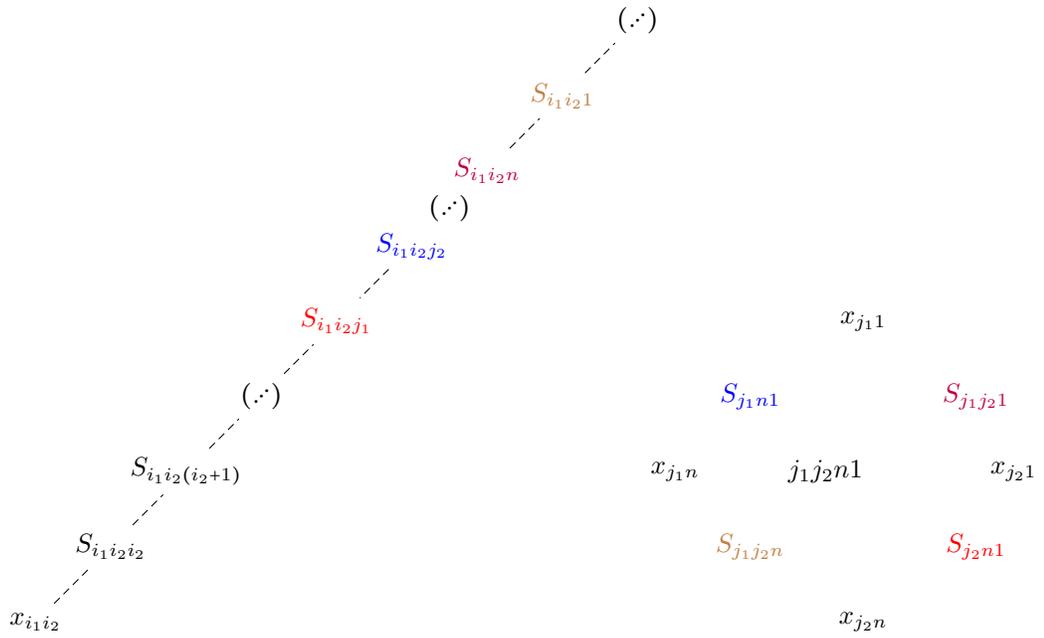
\begin{figure}
\begin{center}
\begin{tikzpicture}
  \node[draw=none](n24252) at (-0.5,0) {$x_{i_1i_2}$};    
  \node[draw=none](n2) at (0.5,1) {$S_{i_1i_2i_2}$};
   \node(n11) at (1.5,2) {$S_{i_1i_2(i_2+1)}$};
  \node(n17) at (2.5,3) {$(\udots)$}; 
  \node[red,very thick](n271) at (3.5,4) {$S_{i_1i_2j_1}$};
  \draw[densely dashed](n17)--(n271);
   \node[blue,very thick](n27) at (4.5,5) {$S_{i_1i_2j_2}$};
   \node(n1995911) at (5,5.5) {$(\udots)$};
   \node[purple,very thick](n415) at (5.5,6){$S_{i_1i_2n}$};
   
   \node[brown, very thick](n420) at (6.5,7){$S_{i_1i_21}$};
    \node(n421) at (7.5,8){$(\udots)$};
   \draw[densely dashed](n420)--(n421);
   \draw[densely dashed](n415)--(n420);
   \draw[densely dashed](n27)--(n271);
   \draw[densely dashed] (n24252) -- (n2);
\draw[densely dashed] (n2) -- (n11);
\draw[densely dashed] (n11) -- (n17);
\node[draw=none](n59) at (8,2) {$x_{j_1n}$};
\node[draw=none](n103) at (10.5,4) {$x_{j_11}$};
\node[blue,very thick](n104) at (9,3) {$S_{j_1n1}$};
  \node[draw=none] (n141415151) at (10,2) {\textbf{$j_1j_2n1$}};
\node[brown,very thick](n73) at (9,1) {$S_{j_1j_2n}$};
\node[draw=none](n106) at (12.5,2) {$x_{j_21}$};
\node[purple,very thick](n106) at (12,3) {$S_{j_1j_21}$};
\node[draw=none](n74) at (10.5,0) {$x_{j_2n}$};
\node[red,very thick](n107) at (12,1) {$S_{j_2n1}$};
\end{tikzpicture}
\end{center}
\caption{Graphical interpretation of the $S$-relation from Theorem \ref{main}\\ in case $(j_0,j_1,j_2,j_3)=(1,j_1,j_1+1,n)$}
\label{figuraaa}
\end{figure}
\begin{example}
\label{ex414}
Let F be a polygonal Heronian frieze of order 6. Then choosing $(i_1, i_2) = (2, 3)$ and $(j_0,j_1,j_2,j_3) = (3,4,5,6)$ corresponds to choosing the dashed line (diagonal) $x_{23}$ and the diamond $3456$, and the corresponding $S$-relation from Theorem \ref{main} is the following:
\begin{equation*}
S_{233}S_{456} - S_{234}S_{356} + S_{235}S_{346} - S_{236}S_{345} = 0.
 \end{equation*}
 Although $S_{233}S_{456}=0$, we keep it to illustrate the pattern.
A graphical interpretation is given in  Figure \ref{figura2}.  
\end{example}
Following on what has already been said about relation between $S$  entries of the polygonal Heronian frieze and minors of the corresponding  coordinate matrix, we use Lemma \ref{lem11} to establish a connection between some subfriezes of a Heronian frieze and Plücker friezes $P(3,n)$, as introduced in \cite{key6}. Firstly, we recall the definition of a Plücker frieze, as stated in \cite{key6}, as well as some of its properties. \\ \\ 
As stated in \cite{key6}, we fix $k, n \in \mathbb{Z}_{>0}$ with $2 \leq k \leq \frac{n}{2}$ and consider the Grassmannian $Gr(k,n)$ as a projective variety via the Plücker embedding, with homogenous coordinate ring 
\begin{equation*}
    {\mathit{A}}(k,n) = \mathbb{C}\left[Gr(k,n) \right].
\end{equation*}
To the homogenous coordinate ring ${\mathit{A}}(k,n)$  we now associate a certain frieze pattern. As stated in \cite{key6}, for brevity, it is convenient to write $[r]^l$ for the set $\{r,r+1,...,r+l-1\}$.
\begin{definition} \cite{key6}
The \textit{Plücker frieze} of type $(k,n)$ is a $\mathbb{Z} \times \{1,2,...,n+k-1\}$-grid with entries given by the map
\begin{equation*}
\varphi_{(k,n)}: \mathbb{Z} \times \{1,2,...,n+k-1\} \to A(k,n), \qquad (r,m) \mapsto p_{o([r']^{k-1},m')},
\end{equation*}
where $r' \in [1,n]$ is the reduction of $r$ modulo $n$ and $m' \in [1,n]$ is the reduction of $m+r'-1$ modulo $n$. \\ We denote the Plücker frieze by $P_{(k,n)}$, and it is presented in  Figure \ref{plfriz}.
\begin{figure}
\begin{center}
\includegraphics[scale=0.7]{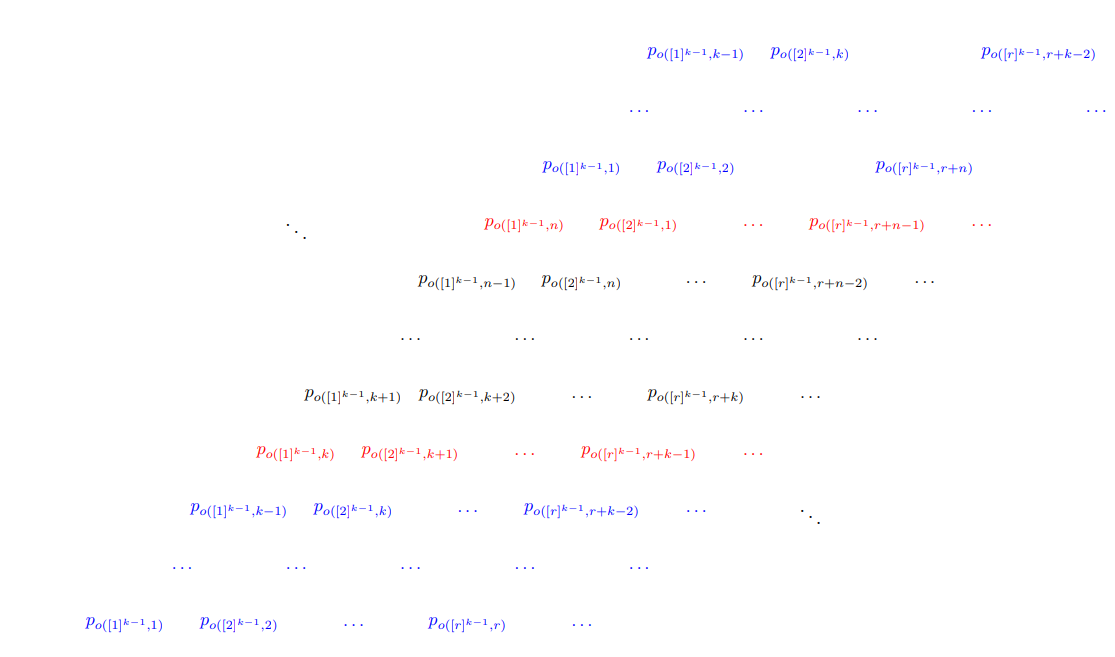}
\end{center}
\caption{Plücker frieze of type $(k,n)$, where indices are reduced modulo $n$. The top and bottom $k-1$ rows (blue) are $0$.}
\label{plfriz}
\end{figure}
\label{def311}
\end{definition}
\begin{example}
The Plücker frieze for $k=3$ is given in Figure \ref{figura6}.
\begin{figure}
\hspace*{-1.3cm}
% [inline block 0: 4 envs, 53161 chars -> data_tex | \begin{tikzpicture} \node[draw=none](n11) at (-1.5,-2.5){$p_{233}$};...]

\caption{Graphical interpretation of the $S$-relation from Example \ref{ex414}}
\label{figura2}
\end{figure}

\begin{definition}\cite{key6} 
The $k \times k$ diamonds in the Plücker frieze $P_{(k,n)}$ are matrices of the form 
\begin{equation}  
A_{[m]^k;r} := (p_{o([r+i-1]^{k-1}, m+j-1)})_{1 \leq i, j \leq k},
\end{equation}
with $r \in [1,n]$ and $m \in [r+k-1, r+n-1] = [r+k-1, r-1]$, where calculation is modulo $n$, with representatives $1,...,n$.
\\ \\ In other words, as stated in \cite{key6}, a $k \times k$ diamond in the Plücker frieze $P_{(k,n)}$ is a matrix having $k$ successive entries of a row of the frieze $P_{(k,n)}$ on its diagonal and shorter rows above and below the diagonal accordingly.
\end{definition}
\noindent The proof of the following theorem can be found in \cite{key6}.
\begin{theorem}
All determinants of $(k+1) \times (k+1)$ diamonds of a Plücker frieze $P_{(k,n)}$ vanish.
\label{thmvan}
\end{theorem}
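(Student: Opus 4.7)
The strategy is to exhibit a nonzero column vector $w \in \mathbb{C}^{k+1}$ in the right kernel of the $(k+1)\times(k+1)$ diamond matrix $A$, which immediately forces $\det A = 0$.

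First, I would unpack the structure. Extending the $k \times k$ diamond definition to size $k+1$, the $(i,j)$-entry of $A$ is the Plücker coordinate $p_{o([r+i-1]^{k-1},\, m+j-1)}$ for $1 \le i, j \le k+1$. Crucially, along row $i$ the first $k-1$ indices form a common prefix $I_i := (r+i-1,\, r+i,\, \ldots,\, r+i+k-3)$ (reduced modulo $n$), and only the last index $m+j-1$ varies with $j$.

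Second, for each fixed row $i$, I would apply the Plücker relation \eqref{plucker} with $(i_1,\ldots,i_{k-1})$ taken to be the sorted form of $I_i$ and $(j_0,\ldots,j_k)$ the sorted form of $(m, m+1, \ldots, m+k)$ modulo $n$. This yields
\begin{equation*}
\sum_{s=0}^{k}(-1)^{s}\, p_{I_i,\, j_s}\, p_{j_0,\ldots,\widehat{j_s},\ldots,j_k}=0.
\end{equation*}
The key point is that the coefficients $w_{s+1} := (-1)^{s}\, p_{j_0,\ldots,\widehat{j_s},\ldots,j_k}$ depend only on $m$, not on the row index $i$. Thus the same column vector $w=(w_1,\ldots,w_{k+1})^{T}$ annihilates every row of $A$, giving $A w = 0$ once the sign bookkeeping discussed below is settled. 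To see $w \neq 0$, note that its components are, up to sign, the $k+1$ distinct Plücker coordinates indexed by the $k$-subsets of $\{m, m+1, \ldots, m+k\}$; regarded as $k \times k$ minors of a generic $k \times n$ matrix representing the subspace, these are nonzero polynomials in the matrix entries. Combined with $Aw=0$, this forces $\det A = 0$ as a polynomial identity on the Plücker variety, hence on every specific point of it.

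The main obstacle is careful bookkeeping of the signs introduced by the reordering operator $o$, which arise both from cyclic reduction modulo $n$ and from interleaving $m+j-1$ among the entries of $I_i$. One must verify that, once these signs are pulled out of the entries $p_{I_i,\,j_s}$, they assemble into a uniform row-dependent diagonal factor that does not disturb the kernel relation; alternatively, one can first handle the case in which all column indices lie entirely above or entirely below the row prefixes in cyclic order (so that all signs are trivially constant), and then reduce the general case to this one by the evident cyclic symmetry of the frieze. Once this sign analysis is organized cleanly, the theorem follows from the single application of \eqref{plucker} displayed above.
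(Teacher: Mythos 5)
The paper does not actually prove this theorem (it is quoted from \cite{key6} with a pointer to that paper for the proof), so your plan can only be judged on its own terms. Its engine is the right one and is essentially the standard mechanism: for each row, the Plücker relation \eqref{plucker} applied to the row prefix $I_i$ and the $k+1$ column indices gives a linear dependence whose coefficients are complementary Plücker coordinates, hence (since $A(k,n)$ is a domain) a nonzero common kernel vector and $\det A=0$. However, the step you defer --- the sign analysis --- is not routine bookkeeping: it is the \emph{only} place where the hypothesis that the array is a genuine diamond of the frieze (a contiguous sub-array of the band $\mathbb{Z}\times\{1,\dots,n+k-1\}$, zero border rows included) can enter, and your argument never invokes that hypothesis. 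Without it the statement is false for the naive index pattern you write down: for $k=2$, $n=5$, $r=4$, $m=5$ the array $\bigl(p_{o(r+i-1,\,m+j-1)}\bigr)_{1\le i,j\le 3}$ is
\[
\begin{pmatrix} p_{45} & p_{14} & p_{24}\\ 0 & p_{15} & p_{25}\\ p_{15} & 0 & p_{12}\end{pmatrix},
\]
whose determinant equals $p_{15}\bigl(p_{12}p_{45}+p_{14}p_{25}-p_{15}p_{24}\bigr)=2\,p_{12}p_{15}p_{45}\neq 0$ by the relation $p_{12}p_{45}-p_{14}p_{25}+p_{15}p_{24}=0$; this array is excluded from the theorem only because it would protrude beyond the band. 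So the entire content of the proof is the verification, for diamonds that do fit in the band, that the sorting signs relating $p_{I_i,c_j}$ to the frieze entry $p_{o(I_i,c_j)}$ factor as $\alpha_i\beta_j$ on the nonzero positions, so that a single vector $w$ (with the $\beta_j$ absorbed) annihilates all rows.

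Both of your proposed ways around this are inadequate as stated. First, the signs are \emph{not} a "uniform row-dependent diagonal factor": already for $k=2$, $n=5$, the valid diamond with prefixes $2,3,4$ and last indices $4,5,1$ has row sign pattern $(+,+,-)$, i.e.\ the sign varies along a row (this happens whenever $k$ is even and the column window wraps past $n$), so a genuinely two-sided factorization $\epsilon_{ij}=\alpha_i\beta_j$ must be established, with the column signs pushed into $w$. Second, the cyclic-symmetry reduction does not work: a diamond whose diagonal lies in frieze row $\mu_0>n-k$ --- for instance the paper's own $4\times4$ example in $P_{(3,6)}$, which involves all of $1,\dots,6$ --- can never be rotated into a position where every column index clears every row prefix without wrapping, and the cyclic shift itself acts on Plücker coordinates only up to the sign $(-1)^{k-1}$ when an index crosses $n$, so it reintroduces exactly the bookkeeping you are trying to avoid. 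Also note that the range $m\in[r+k-1,r-1]$ you carry over from the $k\times k$ case is not the correct admissibility condition in size $k+1$; what is needed is that all $2k+1$ frieze rows met by the diamond lie in $\{1,\dots,n+k-1\}$. Until the factorization of the signs is proved under that condition, the proposal has a genuine gap, even though its underlying idea (Plücker relations, equivalently the rank-$k$ origin of the entries) is the correct and presumably the intended one.
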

\begin{example}
The following holds for the entries of the Plücker frieze $P_{(3,6)}$:
    \[
\begin{vmatrix}   
p_{124} & p_{125} & p_{126} & 0 \\
p_{234} & p_{235} & p_{236} & p_{123}\\
0 & p_{345} & p_{346} & p_{134} \\
0 & 0 & p_{456} & p_{145}
\end{vmatrix} = 0,
\]
where the corresponding Plücker coordinates are shown in red in Figure \ref{pluckerex}.
\end{example}
\begin{definition}
 Let $P=(A_1,A_2,...,A_n)$, with $n \geq 3$, be a polygon in the complex plane, and let $C$ be the corresponding coordinate matrix. A \textit{Heronian minor frieze of order $n$} is a collection of minors of matrix $C$ arranged as in Figure \ref{min_subfr}.
\end{definition}
\begin{definition}
    A $4 \times 4$ diamond in a Heronian minor frieze is a matrix having four successive entries of a row of the frieze on its diagonal and shorter rows above and below the diagonal accordingly.
    \label{definicija}
\end{definition}
\begin{lemma}
    Determinants of $4 \times 4$ diamonds of a Heronian minor frieze vanish.
    \label{lem317}
\end{lemma}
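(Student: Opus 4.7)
The approach is to identify the Heronian minor frieze with a Plücker frieze of type $(3,n)$ and then invoke Theorem \ref{thmvan}. By Lemma \ref{lem11}, the Plücker coordinates of the decomposable element $v_1 \wedge v_2 \wedge v_3 \in \bigwedge^3(V)$ obtained from the rows of the coordinate matrix $C$ are exactly the $3 \times 3$ minors of $C$; that is, $p_{i_1 i_2 i_3} = m_{i_1 i_2 i_3}$ (after fixing orientation). Consequently the entries of the Heronian minor frieze, which are all of the form $m_{a,a+1,b}$, coincide with the Plücker coordinates $p_{a,a+1,b}$ of a specific point in $Gr(3,n)$.

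First, I would verify that the layout in Figure \ref{min_subfr} matches Definition \ref{def311} for $k=3$. Recall that the Plücker frieze $P_{(3,n)}$ has entries $\varphi_{(3,n)}(r,m) = p_{o([r']^{2}, m')} = p_{o(r', r'+1, m')}$ where indices are taken modulo $n$. Comparing this with the entries of the Heronian minor frieze, each row (read diagonally) is of the form $m_{a, a+1, a+1}, m_{a,a+1,a+2}, \ldots, m_{a,a+1,a}$ for a fixed $a$, exactly as in the Plücker frieze indexed by the first two coordinates $r, r+1 = a, a+1$. Hence the two friezes agree entry-by-entry (up to the identification of $m$ with $p$ for the specific point of $Gr(3,n)$ determined by $C$).

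Second, I would note that under this identification, a $4 \times 4$ diamond of the Heronian minor frieze in the sense of Definition \ref{definicija} is precisely a $(k+1) \times (k+1) = 4 \times 4$ diamond of the associated Plücker frieze $P_{(3,n)}$, since the definition of a diamond in each case only refers to the combinatorial placement of entries (four successive entries on the diagonal, with shorter rows above and below).

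Finally, applying Theorem \ref{thmvan} with $k=3$ yields that the determinants of all such $4 \times 4$ diamonds in $P_{(3,n)}$ vanish, and hence so do the determinants of $4 \times 4$ diamonds of the Heronian minor frieze. The only subtle point — and the one I would take most care with — is the bookkeeping of the matching between the indexing conventions of the two friezes (particularly the modular reductions and the ordering $o(\cdot)$) so that the two diamonds really do coincide as matrices, not merely as unordered collections of entries; once this alignment is fixed, the lemma is an immediate corollary of Theorem \ref{thmvan}.
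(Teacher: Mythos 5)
Your proposal is correct and follows essentially the same route as the paper: identify the entries of the Heronian minor frieze with Plücker coordinates via Lemma \ref{lem11} and apply Theorem \ref{thmvan} for $k=3$. The indexing subtlety you flag is exactly what the paper disposes of with the cyclic invariance $m_{ijk}=m_{kij}=m_{jki}$ (cyclic permutations of three indices are even), which aligns entries such as $m_{n,1,b}$ with the ordered Plücker coordinates $p_{o(\cdot)}$.
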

\begin{proof}
This follows from Lemma \ref{lem11}, Theorem \ref{thmvan}, and the fact that $m_{ijk} = m_{kij} = m_{jki}$, for $i,j,k \in \{1,2,...,n\}$.
\end{proof}
\begin{definition}
\label{podfriz}
Let $P = (A_1,A_2,...,A_n)$, with $n \geq 3$ be a polygon in the complex plane, with its corresponding Heronian frieze $F$. An \textit{$S$-subfrieze} of $F$ is a collection of the $S$-entries of F on alternating diagonals, as presented in Figure \ref{subfr}.
\end{definition}
\begin{definition}
Let $P = (A_1,A_2,...,A_n)$, with $n \geq 3$ be a polygon in the complex plane, and $F$ its corresponding Heronian frieze. 
 We define a $4 \times 4$ diamond of the $S$-subfrieze of $F$ analogously as we did for a Heronian minor frieze in Definition \ref{definicija}.
\end{definition}

\begin{lemma}
Let $F$ be a polygonal Heronian frieze of order $n$. Determinants of $4 \times 4$ diamonds of the $S$-subfrieze of $F$ are equal to 16 times the corresponding determinant of the Heronian minor frieze. 
\label{lem320}
\end{lemma}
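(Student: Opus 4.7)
The plan is to exploit the elementary relationship $S_{ijk} = 2\,m_{ijk}$ (noted in the discussion immediately preceding Definition~\ref{her_minor}) entry by entry, and then use homogeneity of the determinant.

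First, I would observe that by the definition of the $S$-subfrieze (Definition~\ref{podfriz}) and of the Heronian minor frieze (see Figure~\ref{min_subfr}), both friezes are arranged in exactly the same combinatorial pattern: the entry of the $S$-subfrieze in a given position is $S_{ijk}$, while the entry of the Heronian minor frieze in the same position is $m_{ijk}$, for the same triple $(i,j,k)$. Therefore, for a given $4 \times 4$ diamond of the $S$-subfrieze (as in Definition~\ref{definicija}, applied to the $S$-subfrieze), the underlying $4 \times 4$ matrix, call it $D_S$, and the $4 \times 4$ matrix $D_m$ of the corresponding diamond of the Heronian minor frieze are indexed by the same positions.

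Next, I would verify entrywise that $D_S = 2\,D_m$. For each non-zero position, this is the identity $S_{ijk} = 2\,m_{ijk}$ recorded just before Definition~\ref{her_minor}. For each position that is zero in one of the friezes (for instance, the corners of a $4 \times 4$ diamond that lie outside the frieze, or entries of the form $S_{iij}$ forced to zero by the boundary conditions \eqref{bound}), the corresponding minor $m_{iij}$ is also zero because it is the determinant of a $3\times 3$ submatrix of $C$ with a repeated column; and trivially $0 = 2\cdot 0$, so $D_S = 2\,D_m$ holds on these entries as well.

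Finally, applying the scaling property of determinants to the $4 \times 4$ matrix $D_S = 2\,D_m$ gives
\begin{equation*}
\det(D_S) = \det(2\,D_m) = 2^4 \det(D_m) = 16 \det(D_m),
\end{equation*}
which is the desired identity. The argument has no real obstacle: the only thing to be careful about is confirming that the bijection between positions of the two friezes genuinely matches up every entry (including the boundary zeros of the $4 \times 4$ diamond), so that the matrix identity $D_S = 2 D_m$ holds in every one of the sixteen positions before invoking the homogeneity of the determinant.
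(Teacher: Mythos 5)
Your proposal is correct and follows essentially the same route as the paper: the identity $S_{ijk}=2\,m_{ijk}$ (coming from Lemma \ref{lem1}) applied entrywise, followed by the scaling property of the determinant, giving the factor $2^4=16$. Your explicit check of the boundary/zero entries is a welcome extra detail but does not change the argument.
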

\begin{proof}
Follows from Lemma \ref{lem1} and the formula for multiplying a determinant with a scalar.
\end{proof}
\begin{theorem}
\label{bitna}
   Let F be a polygonal Heronian frieze of order $n$. Then determinants of $4 \times 4$ diamonds of its $S$-subfrieze vanish.
\end{theorem}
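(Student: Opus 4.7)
The plan is to derive Theorem \ref{bitna} as an immediate corollary of the two preceding lemmas, which together already contain all the substantive work. Concretely, fix a $4 \times 4$ diamond $D$ in the $S$-subfrieze of $F$. By Definition \ref{podfriz} and the construction of the $S$-subfrieze, the entries of $D$ are of the form $S_{ijk}$, and by Lemma \ref{lem1} each such entry equals $2m_{ijk}$, where $m_{ijk}$ is the corresponding Heronian minor of the coordinate matrix $C$ of $F$. Thus $D$ is obtained from the corresponding $4 \times 4$ diamond $D'$ of the Heronian minor frieze (in the sense of Definition \ref{definicija}) by multiplying every entry by $2$.

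Next I would invoke Lemma \ref{lem320}, which records precisely this relationship at the level of determinants: $\det(D) = 2^4 \det(D') = 16 \det(D')$. Finally, Lemma \ref{lem317} asserts that $\det(D') = 0$, and combining the two yields $\det(D) = 0$.

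There is essentially no obstacle at this stage, because the heavy lifting has already been done: the vanishing of $\det(D')$ is inherited from Theorem \ref{thmvan} (vanishing of $(k+1) \times (k+1)$ diamonds in the Plücker frieze $P_{(3,n)}$) through Lemma \ref{lem11}, which identifies Plücker coordinates with the $3 \times 3$ minors of $C$ up to the symmetries $m_{ijk} = m_{kij} = m_{jki}$ that match the cyclic layout of the Heronian minor frieze. The only point worth stating carefully is that every $4 \times 4$ diamond of the $S$-subfrieze corresponds, entry by entry, to a $4 \times 4$ diamond of the Heronian minor frieze (so that Lemma \ref{lem320} really applies to this $D$), which is immediate from how the two friezes are indexed in Figures \ref{min_subfr} and in the layout underlying Definition \ref{podfriz}. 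The theorem then follows.
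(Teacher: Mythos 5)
Your proposal is correct and follows exactly the paper's own route: the paper proves Theorem \ref{bitna} by combining Lemma \ref{lem320} (the determinant of a $4\times 4$ diamond of the $S$-subfrieze is $16$ times that of the corresponding diamond of the Heronian minor frieze) with Lemma \ref{lem317} (the latter determinants vanish). Your write-up merely spells out the entrywise factor $S_{ijk}=2m_{ijk}$ and the correspondence of diamonds, which is the same argument in slightly more detail.
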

\begin{proof}
Follows from Lemma \ref{lem317} and Lemma \ref{lem320}.
\end{proof}
\begin{example}
 Let $P=(A_1,A_2,...,A_8)$ be a labeled $8$-gon. Then we have that the following holds for the entries of the $S$-subfrieze of the corresponding polygonal Heronian frieze: 
 \[
\begin{vmatrix}   
S_{125} & S_{126} & S_{127} & S_{128} \\
S_{235} & S_{236} & S_{237} & S_{238}\\
S_{345} & S_{346} & S_{347} & S_{348} \\
S_{455}  & S_{456} & S_{457} & S_{458}
\end{vmatrix} = 0.
\]  
The corresponding picture is given in Figure \ref{fig,labelings}, where the entries highlighted
in red are the ones appearing in the determinant.
\end{example}
\begin{figure}
\begin{center}
  \hspace*{-2cm}  
\begin{tikzpicture}

     \node[draw=none](n420) at (-1.5,-0.5){$S_{n11}$};
     
     \node[draw=none](n91951) at (-3.5,-0.5){$\cdots$};
     \node[draw=none](n421) at (-0.5,0.5){$S_{n12}$};
     \draw[densely dashed](n420)--(n421);
     \node[draw=none](n422) at (0.5,1.5){$S_{n13}$};
     \node[draw=none](n1991) at (-1.5,1.5){$\cdots$};
     \node[draw=none](n423) at (1.5,2.5){$S_{n14}$};
     \draw[densely dashed] (n422)--(n423);
     \node[draw=none](n424) at (2,3){$\udots$};
     \node[draw=none](n425) at ( 2.5,3.5) {$S_{n1(n-2)}$};
     \node[draw=none](n898) at (0.5,3.5){$\cdots$};
     \node[draw=none](n426) at (3.5,4.5){$S_{n1(n-1)}$};
     \node[draw=none](n427) at (4.5,5.5){$S_{n1n}$};
   
     \node[draw=none](n1000) at (2.5,5.5) {$\cdots$};

     \draw[densely dashed](n426)--(n427);
     \draw[densely dashed](n425)--(n426);
     \draw[densely dashed](n421)--(n422);
     \node[draw=none](n76) at (6.5,-0.5){$S_{n11}$};
     \node[draw=none](n15919) at (8.5,-0.5){$\cdots$};
     \node[draw=none](n77) at (7.5,0.5){$S_{n12}$};
     \node[draw=none](n78) at (8.5,1.5){$S_{n13}$};
     \node[draw=none](n19591) at (10.5,1.5){$\cdots$};
     \node[draw=none](n90) at (9.5,2.5){$S_{n14}$};
     \draw[densely dashed](n78)--(n90);
     \node[draw=none](n591) at (10,3){$\udots$};
     \node[draw=none](n592) at (10.5,3.5){$S_{n1(n-2)}$};
     \node[draw=none](n15191) at (12.5,3.5){$\cdots$};
     \node[draw=none](n593) at (11.5,4.5){$S_{n1(n-1)}$};
     \node[draw=none](n600) at (12.5,5.5) {$S_{n1n}$};
     \node[draw=none](n1051404) at (14.5,5.5){$\cdots$};
     \draw[densely dashed](n593)--(n600);
   
     \draw[densely dashed](n592)--(n593);
     \draw[densely dashed](n77)--(n78);
     \draw[densely dashed](n76)--(n77);

     \node[draw=none](n29) at (4.5,-0.5){$S_{344}$};
     \node[draw=none](n591996919) at (5.5,-0.5){$\cdots$};
     \node[draw=none](n30) at (5.5,0.5){$S_{345}$};
     \draw[densely dashed](n29)--(n30);
     \node[draw=none](n31) at (6.5,1.5) {$S_{346}$};
      \draw[densely dashed](n30)--(n31);
     \node[draw=none](n32) at (7.5,2.5){$S_{347}$};
     \draw[densely dashed](n31)--(n32);
     \node[draw=none](n60) at (8.8,2.5){$\cdots$};
     \node[draw=none](n40) at (8.5,3.5) {$S_{341}$};
     \node[draw=none](n41) at (9.5,4.5) {$S_{342}$};
     \node[draw=none](n42) at (10.5,5.5){$S_{343}$};
    \node[draw=none](n56919169) at (11.5,5.5){$\cdots$};
     \draw[densely dashed](n41)--(n42);

     \draw[densely dashed] (n40) -- (n41);

     \node[draw=none](n33) at (8,3) {$\udots$};

     \node[draw=none](n15) at (2.5,-0.5){$S_{233}$};
     
     \node[draw=none](n16) at (3.5,0.5){$S_{234}$};
     \draw[densely dashed](n15)--(n16);
     \node[draw=none](n17) at (4.5,1.5) {$S_{235}$};
     \draw[densely dashed] (n16)--(n17);
     \node[draw=none](n18) at (5.5,2.5){$S_{236}$};
     \draw[densely dashed](n17)--(n18);
     \node[draw=none](n19) at (6,3){$\udots$};
     \node[draw=none](n20) at (6.5,3.5) {$S_{23n}$};
     \node[draw=none](n21) at (7.5,4.5){$S_{231}$};
     \node[draw=none](n22) at (8.5,5.5) {$S_{232}$};

     \draw[densely dashed] (n21)--(n22);
     \draw[densely dashed](n20)--(n21);
     
     \node[draw=none](n2) at (0.5,-0.5) {$S_{122}$};
     
     \node[draw=none](n3) at (1.5,0.5){$S_{123}$};
     \draw[densely dashed](n2)--(n3);
     \node[draw=none](n4) at (2.5,1.5){$S_{124}$};
     \draw[densely dashed](n3)--(n4);
     \node[draw=none](n5) at (3.5,2.5){$S_{125}$};
     \draw[densely dashed](n4)--(n5);
     \node[draw=none](n6) at (4,3){$\udots$};
     \node[draw=none](n7) at (4.5,3.5){$S_{12(n-1)}$};
     \node[draw=none](n12) at (5.5,4.5){$S_{12n}$};
     \node[draw=none](n8) at (6.5,5.5){$S_{121}$};
     
     \draw[densely dashed](n7)--(n12);
     \draw[densely dashed](n8)--(n12);
\end{tikzpicture}
\end{center}
\caption{$S$-subfrieze of a  Heronian frieze of order $n$}
\label{subfr}
\end{figure}
\begin{remark} 
\label{napomenaaa}
By taking $S$-entries of a Heronian frieze $F$, again on alternating diagonals, but now on the ones not chosen in Definition \ref{podfriz}, we get a different subfrieze consisting of $S$-entries. Similar methods to the ones from Theorem \ref{bitna} can be used to show that $4 \times 4$ diamonds of this subfrieze have vanishing determinants as well. An example of such a subfrieze is shown in Figure \ref{subfrrr}.
\end{remark}
\begin{figure}
\begin{center}
\begin{tikzpicture}[xscale=1,yscale=1.3]

 \node[draw=none](n3) at (1.5,2.5) {$0$};

 \node[draw=none](n12) at (2,3) {$0$};

  \node[draw=none](n1387445) at (6,5) {$p_{345}$};
  \node[draw=none](n51991) at (7,5) {$p_{456}$};
  \node[draw=none](n59151) at (8,5){$p_{156}$};
  \node[draw=none](n19519012) at (8.5,5.5) {$0$};
  \node[draw=none](n9419419) at (9,6) {$0$};
  \node[draw=none](n68484123) at (6.5,4.5) {$p_{356}$};
  
 \node[draw=none] (n15116) at (7,4) {$p_{136}$};
 \node[draw=none](n419149) at (8,4) {$p_{124}$};
 \node[draw=none](n919519196) at (7.5,3.5){$p_{123}$};
 \node[draw=none](n591951951) at (8.5,3.5){$p_{234}$};
 \node[draw=none](n411951951) at (8,3){$0$};
 \node[draw=none](n59195019591) at (8.5,2.5){$0$};
 \node[draw=none](n591519159) at (7.5,2.5){$0$};
 \node[draw=none](n41991851) at (7,3) {$0$};
 \node[draw=none](n4915919) at (6.5,2.5){$0$};
 \node[draw=none](n9419519) at (7.5,4.5) {$p_{146}$};
 \node[draw=none](n49199491) at (8.5,4.5){$\cdots$};
  \node[draw=none](n5101961) at (6.5,5.5){$0$};
  \node[draw=none](n9195195) at (7,6) {$0$};
  \node[draw=none](n591961) at (7.5,5.5) {$0$};
 \node[draw=none](n195919) at (8,6){$0$};
     \node[draw=none] (n15619690119) at (1.5,5.5){$0$};
    \node[draw=none](n591969119) at (2,6){$0$};
  \node[draw=none](n19087337) at (1.5,4.5) {$p_{146}$};

 \node[red,very thick](n18) at (2.5,3.5) {$p_{234}$};
 
\node[draw=none](n131) at (2,5) {$p_{156}$}; 
 \node[red,very thick](n21) at (2.5,4.5) {$p_{125}$};
\node[red,very thick](n515191) at (2,4){$p_{124}$};
\node[draw=none](n10) at (1.5,3.5) {$p_{123}$};
 \node[draw=none](n1) at (1,3) {$0$};
  \node[draw=none](n7) at (0.5,2.5){$0$};
 \node[draw=none](n4191994) at (1,4){$\cdots$};
 \node[red,very thick](n23) at (3,4) {$p_{235}$};

 \node[red,very thick](n266474) at (3.5,5.5) {$0$};
 \node[red,very thick](n08529251) at (3,5){$p_{126}$};
 \node[draw=none](n5919519) at (4,6){$0$};
\node[draw=none](n519519) at (5,6) {$0$};
 \node[draw=none](n151664335351) at (2.5,5.5){$0$};
\node[draw=none](n5195159) at (3,6) {$0$};

 \node[red,very thick](n31) at (3,3) {$0$};

\node[draw=none](n33) at (2.5,2.5) {$0$};

 \node[red,very thick](n38) at (3.5,3.5) {$p_{345}$};

  \node[red,very thick](n41) at (3.5,2.5) {$0$};

  \node[red,very thick](n45) at (4,4) {$p_{346}$};

\node[red,very thick](n48) at (4,3) {$0$};

\node[red,very thick](n51) at (4,5) {$p_{123}$};
\node[red,very thick](n5149191) at (3.5,4.5) {$p_{236}$};
\node[draw=none](n513452) at (4.5,5.5) {$0$};
\node[draw=none](n91519) at (6,6) {$0$};

\node[draw=none](n552929062) at (5.5,5.5){$0$};

\node[red,very thick](n53) at (4.5,4.5) {$p_{134}$};
\node[draw=none](n100) at (5,5) {$p_{234}$};

\node[draw=none](n103) at (5.5,4.5) {$p_{245}$};

\node[draw=none](n106) at (6,4) {$p_{256}$};

\node[draw=none](n109) at (6.5,3.5) {$p_{126}$};

\node[draw=none](n111) at (6,3) {$0$};

 \node[red,very thick](n56) at (4.5,3.5) {$p_{456}$};

 \node[red,very thick](n59) at (5,4) {$p_{145}$};

 \node[draw=none](n62) at (5,3) {$0$};

\node[draw=none](n64) at (4.5,2.5) {$0$};

\node[draw=none](n70) at (5.5,2.5) {$0$};

\node[draw=none](n74) at (5.5,3.5) {$p_{156}$};

\end{tikzpicture}
\end{center}
\caption{Fragment of Plücker frieze $P_{(3,6)}$}
\label{pluckerex}
\end{figure}
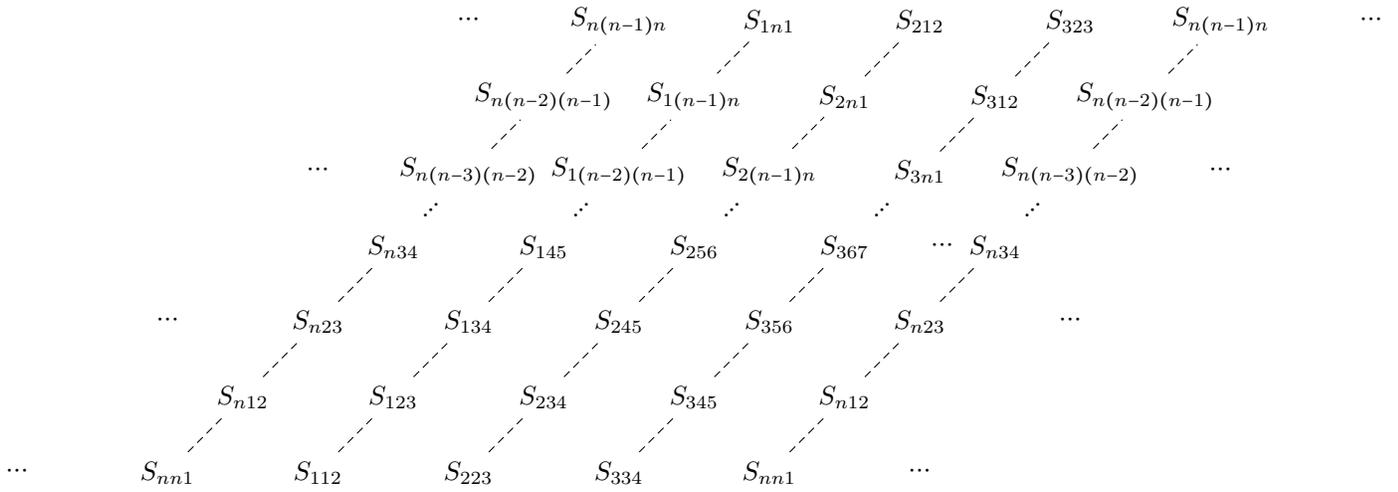
\begin{figure}
\begin{center}
  \hspace*{-2.5cm}  
\begin{tikzpicture}

     \node[draw=none](n420) at (-1.5,-0.5){$S_{nn1}$};
     
     \node[draw=none](n91951) at (-3.5,-0.5){$\cdots$};
     \node[draw=none](n421) at (-0.5,0.5){$S_{n12}$};
     \draw[densely dashed](n420)--(n421);
     \node[draw=none](n422) at (0.5,1.5){$S_{n23}$};
     \node[draw=none](n1991) at (-1.5,1.5){$\cdots$};
     \node[draw=none](n423) at (1.5,2.5){$S_{n34}$};
     \draw[densely dashed] (n422)--(n423);
     \node[draw=none](n424) at (2,3){$\udots$};
     \node[draw=none](n425) at ( 2.5,3.5) {$S_{n(n-3)(n-2)}$};
     \node[draw=none](n898) at (0.5,3.5){$\cdots$};
     \node[draw=none](n426) at (3.5,4.5){$S_{n(n-2)(n-1)}$};
     \node[draw=none](n427) at (4.5,5.5){$S_{n(n-1)n}$};
   
     \node[draw=none](n1000) at (2.5,5.5) {$\cdots$};

     \draw[densely dashed](n426)--(n427);
     \draw[densely dashed](n425)--(n426);
     \draw[densely dashed](n421)--(n422);
     \node[draw=none](n76) at (6.5,-0.5){$S_{nn1}$};
     \node[draw=none](n15919) at (8.5,-0.5){$\cdots$};
     \node[draw=none](n77) at (7.5,0.5){$S_{n12}$};
     \node[draw=none](n78) at (8.5,1.5){$S_{n23}$};
     \node[draw=none](n19591) at (10.5,1.5){$\cdots$};
     \node[draw=none](n90) at (9.5,2.5){$S_{n34}$};
     \draw[densely dashed](n78)--(n90);
     \node[draw=none](n591) at (10,3){$\udots$};
     \node[draw=none](n592) at (10.5,3.5){$S_{n(n-3)(n-2)}$};
     \node[draw=none](n15191) at (12.5,3.5){$\cdots$};
     \node[draw=none](n593) at (11.5,4.5){$S_{n(n-2)(n-1)}$};
     \node[draw=none](n600) at (12.5,5.5) {$S_{n(n-1)n}$};
     \node[draw=none](n1051404) at (14.5,5.5){$\cdots$};
     \draw[densely dashed](n593)--(n600);
   
     \draw[densely dashed](n592)--(n593);
     \draw[densely dashed](n77)--(n78);
     \draw[densely dashed](n76)--(n77);

     \node[draw=none](n29) at (4.5,-0.5){$S_{334}$};
     
     \node[draw=none](n30) at (5.5,0.5){$S_{345}$};
     \draw[densely dashed](n29)--(n30);
     \node[draw=none](n31) at (6.5,1.5) {$S_{356}$};
      \draw[densely dashed](n30)--(n31);
     \node[draw=none](n32) at (7.5,2.5){$S_{367}$};
     \draw[densely dashed](n31)--(n32);
     \node[draw=none](n60) at (8.8,2.5){$\cdots$};
     \node[draw=none](n40) at (8.5,3.5) {$S_{3n1}$};
     \node[draw=none](n41) at (9.5,4.5) {$S_{312}$};
     \node[draw=none](n42) at (10.5,5.5){$S_{323}$};
    
     \draw[densely dashed](n41)--(n42);

     \draw[densely dashed] (n40) -- (n41);

     \node[draw=none](n33) at (8,3) {$\udots$};

     \node[draw=none](n15) at (2.5,-0.5){$S_{223}$};
     
     \node[draw=none](n16) at (3.5,0.5){$S_{234}$};
     \draw[densely dashed](n15)--(n16);
     \node[draw=none](n17) at (4.5,1.5) {$S_{245}$};
     \draw[densely dashed] (n16)--(n17);
     \node[draw=none](n18) at (5.5,2.5){$S_{256}$};
     \draw[densely dashed](n17)--(n18);
     \node[draw=none](n19) at (6,3){$\udots$};
     \node[draw=none](n20) at (6.5,3.5) {$S_{2(n-1)n}$};
     \node[draw=none](n21) at (7.5,4.5){$S_{2n1}$};
     \node[draw=none](n22) at (8.5,5.5) {$S_{212}$};

     \draw[densely dashed] (n21)--(n22);
     \draw[densely dashed](n20)--(n21);
     
     \node[draw=none](n2) at (0.5,-0.5) {$S_{112}$};
     
     \node[draw=none](n3) at (1.5,0.5){$S_{123}$};
     \draw[densely dashed](n2)--(n3);
     \node[draw=none](n4) at (2.5,1.5){$S_{134}$};
     \draw[densely dashed](n3)--(n4);
     \node[draw=none](n5) at (3.5,2.5){$S_{145}$};
     \draw[densely dashed](n4)--(n5);
     \node[draw=none](n6) at (4,3){$\udots$};
     \node[draw=none](n7) at (4.5,3.5){$S_{1(n-2)(n-1)}$};
     \node[draw=none](n12) at (5.5,4.5){$S_{1(n-1)n}$};
     \node[draw=none](n8) at (6.5,5.5){$S_{1n1}$};
     
     \draw[densely dashed](n7)--(n12);
     \draw[densely dashed](n8)--(n12);
\end{tikzpicture}
\end{center}
\caption{Subfrieze from Remark \ref{napomenaaa}}
\label{subfrrr}
\end{figure} 
\newpage


\begin{thebibliography}{99}
\bibitem{fs} S. Fomin and L. Setiabrata, \textit{Heronian friezes}, Int. Math. Res. Not. IMRN 2021 (2021),
no. 1, 648–694
\bibitem{key2} A. Sneperger, \textit{Some determinants and relations in Heronian friezes}, arXiv 2505.08015, 2025

\bibitem{key5} B. R. Marsh, \textit{Lecture Notes on Cluster Algebras}, European Mathematical Society, 2013 
\bibitem{key6} K. Baur, E. Faber, S. Gratz, K. Serhiyenko, G. Todorov, \textit{Friezes satisfying higher $SL_{k}$-determinants}, Algebra and Number Theory 2021, vol. 15, 29-68
\bibitem{key7} H. S. M. Coxeter, \textit{Frieze patterns}, Acta Arith. 1971, 18:297-310
\bibitem{key8} J. H. Conway, H. S. M. Coxeter, \textit{Triangulated polygons and frieze patterns}, Math. Gaz. 1973, 57(400):87-94, 175-183
\end{thebibliography}
\end{document}